\newcounter{forwardtheorem}
\newcounter{citedtheorems}
\newtheorem{defn}{Definition}[section]
\newtheorem{theorem}[defn]{Theorem}
\newtheorem{prop}[defn]{Proposition}
\newtheorem*{theorem-x}{Theorem}
\newtheorem*{theorem-m}{Main Theorem}
\newtheorem*{theorem-n}{Main Theorem}
\newtheorem*{cor-x}{Corollary}
\newtheorem*{lemma-x}{Lemma}
\newtheorem*{concl-x}{Conclusion}
\newtheorem*{claim-x}{Claim}
\newtheorem*{thm-r}{Theorem \ref{concl:sop2-max}}
\newtheorem*{thm-q}{Theorem \ref{theorem:p-t}}
\newtheorem*{claim-s}{Claim \ref{m1-sat}}
\newtheorem{thm-lit}[citedtheorems]{Theorem}
\newtheorem{defn-a}[citedtheorems]{Definition}
\newtheorem{fact}[defn]{Fact}
\newtheorem{cor}[defn]{Corollary}
\newtheorem{concl}[defn]{Conclusion}
\newtheorem{conv}[defn]{Convention}
\newtheorem{conv-r}[defn]{Conventions and Remarks}
\newtheorem{claim}[defn]{Claim}
\newtheorem{lemma}[defn]{Lemma}
\newtheorem{obs}[defn]{Observation}
\newtheorem{rmk}[defn]{Remark}
\newtheorem*{rmk-star}{Remark}
\newtheorem{disc}[defn]{Discussion}
\newcommand{\sbr}{\vspace{1mm}}
\newtheorem{expl}[defn]{Example}
\newcommand{\de}{\mathcal{D}}
\newcommand{\mx}{\mathbf{x}}
\newcommand{\my}{\mathbf{y}}
\newcommand{\possible}[1]{{\color{blue}}{\color{black}#1}}
\newcommand{\vp}{\varphi}
\newcommand{\uu}{\mathcal{U}}
\newcommand{\br}{\vspace{4mm}}
\newcommand{\mbp}{\mathbf{P}}
\newcommand{\tlf}{\lesssim}
\newcommand{\mcr}{\mathcal{R}}
\newcommand{\tlg}{\tilde{\Gamma}}
\newcommand{\hn}{H_{2n+1}}
\newcommand{\hip}{H_\infty(p)}
\newcommand{\id}{\operatorname{id}}
\newcommand{\supp}{\operatorname{sup}}
\newcommand{\mcx}{\mathbf{X}}
\newcommand{\mcy}{\mathbf{Y}}
\newcommand{\UT}{\operatorname{UT}}
\newcommand{\GL}{\operatorname{GL}}
\title[Complexity and randomness in the Heisenberg groups]{Complexity and randomness \\ in the Heisenberg groups (and beyond) }
\date{Version of August 13, 2021. }
\author[Diaconis]{Persi Diaconis}
\address{Department of Mathematics and Statistics, Stanford University}
\email{diaconis@math.stanford.edu}
\author[Malliaris]{Maryanthe Malliaris}
\address{Department of Mathematics, University of Chicago}
\email{mem@math.uchicago.edu}
\begin{document}

\maketitle

\begin{abstract}
By studying the commuting graphs of conjugacy classes of the sequence of Heisenberg groups $H_{2n+1}(p)$ and their limit $H_\infty(p)$ 
we find pseudo-random behavior (and the random graph in the limiting case). This makes a nice case study for transfer of 
information between finite and infinite objects. 
Some of this behavior transfers to the problem of understanding what makes understanding the character theory of 
the uni-upper-triangular group (mod p) ``wild.'' 
Our investigations in this paper may be seen as a meditation on the question: is randomness simple or is it complicated? 
\end{abstract}

\vspace{10mm}
\hfill{\emph{In memoriam Vaughan Jones.}}

\vspace{10mm}

This paper is dedicated to Vaughan Jones. 
Both of the authors knew Vaughan, both from Berkeley and through being invited speakers to the seminars he organized in New Zealand, 
in Kaikoura and Hanmer Springs, and in Napier, respectively. 
He was a wonderful, lively human being. Always trying to communicate and get you involved with whatever current project he had in focus -- 
from wind surfing to the the mathematics of tangles. He was happy to explain anything he knew about `in English'. 

He also liked to keep 
things lively. For example, during visits to Kaikoura, Diaconis and Vaughan's great friend Hugh Woodin fell in love with a New Zealand chocolate bar 
`Peanut Slabs'. After the neighborhood supply was exhausted, they made forays to nearby towns surprised to find ``somebody came in and bought out our 
stock yesterday''.  Vaughan had spotted their weakness and cornered local supplies. He walked through the conference the next day 
preening (and munching on a peanut slab (sigh)). He shared later, but the story went on. Diaconis routinely gives five or so talks a year at Berkeley. For 
the NEXT SEVERAL YEARS, walking into the conference room, there was a peanut slab on the lecturers table. Whether it was 
Vaughan or Hugh, it was a wonderful inducement. 

\vspace{5mm}

\noindent \textbf{Acknowledgments.}
Thanks to Peter Cameron, Bob Guralnick, Marty Isaacs, Justin Moore, Benny Sudakov, 
and Nat Thiem; and 
thanks to the organizers and participants of the New Zealand summer schools in 
Hanmer Springs, Kaikoura, and Napier.  
\possible{We thank the referee for very helpful comments. } 
Diaconis was partially supported by NSF 1954042, and Malliaris 
by NSF 1553653.

\newpage

\section{Introduction}

Let $n \in \mathbb{N}$ and $p$ be a prime. 
Define the Heisenberg group $\hn(p)$ as $\{ [x,y,z] : x,y \in \mathbb{F}^n_p, z \in \mathbb{F}_p \}$ 
with $[x,y,z][x^\prime, y^\prime, z^\prime] = 
[x+x^\prime, y+y^\prime, z+z^\prime + xy^\prime]$ where $xy^\prime = x_1 y^\prime_1 + \cdots + x_n y^\prime_n$. 
Thus $\id = [0,0,0]$, $[x,y,z]^{-1} = [-x,-y,-z+x y]$, $[x,y,z]^{-1}[a,b,c] [x,y,z] = [a, b, c +ay -xb ]$, $[x,y,z]^{-1}[a,b,c]^{-1}[x,y,z][a,b,c] = [0,0,xb-ay]$.  
Thus $[x,y,z]$ and $[x^\prime, y^\prime, z^\prime]$ commute if and only if $xy^\prime - yx^\prime = 0 ~(\operatorname{mod} p)$. 
As discussed below, $\hn(p)$ are the ``extraspecial $p$-groups of exponent $p$''.  Thus $|\hn(p)| = p^{2n+1}$, 
$Z(\hn(p)) = H^\prime_{2n+1}(p) = \Phi(\hn(p)) = \{ [0,0,z] : z \in \mathbb{F}_p \} \cong \mathbb{F}_p$, and (for $p>2$) $[x,y,z]^p = \id$. 
Here $\Phi$ is the Frattini subgroup,  
$H^\prime$ is the commutator subgroup, and $Z(H)$ is the center. 
These definitions and facts hold as well for $H_\infty(p)$ with $\mathbb{F}^n_p$ replaced by countably
 infinite vectors with at most finitely many nonzero elements. 
$H_{2n+1}(p)$ can also be seen as the $(n+2)\times(n+2)$ matrices with ones on the main diagonal, zeroes below the diagonal, $x$ in the top row
(i.e., second through $(n+1)$-st entries of the top row), 
$y$ in the last column (second through $(n+1)$-st entries) and $z$ in the upper right hand corner. 

The conjugacy classes of $\hn(p)$ and $\hip$ are the center and the cosets of the center. Let $[x,y, *] = C_{xy}$ denote these non central classes. 
Form a graph $\Gamma(H_{2n+1})$  (respectively $\Gamma(H_{\infty})$)  
with vertices the elements of $H_{2n+1}(p)$ and an edge between two elements if they commute; such a $\Gamma$ is often called a commuting graph.  
[For some purposes we may want to include a loop at each vertex, because an element commutes with itself.]   
Observe that if $[x,y,z] \in C_{xy}$ and $[a,b,c] \in C_{ab}$ commute then all elements of $C_{xy}$ commute with all elements of $C_{ab}$. 
Thus, in $\Gamma$, each conjugacy class $C_{xy}$ forms a complete graph, and the induced bipartite graph between any two distinct conjugacy classes 
$C_{xy}$ and $C_{ab}$ is either complete or empty. So we may also 
form the quotient graph $\tilde{\Gamma}(H_{2n+1})$ (respectively $\tilde{\Gamma}(H_{\infty})$) with 
vertices the non-central conjugacy classes and an edge from $C_{xy}$ to $C_{ab}$ if their elements commute, that is, if $xb-ay = 0$.   
A subtlety to note in this definition is that the quotient group mod the center is elementary abelian.  

It will be useful to know that:

\begin{fact}[\cite{e-t}, see \S \ref{s:c-g} below] \label{et-fact} For any finite group $G$, 
\[ e(G) = |G| c(G) \]
where $e(G)$ is the number of ordered pairs of commuting elements, 
$|G|$ is the size of $G$, and $c(G)$ is the number of conjugacy classes. 
\end{fact}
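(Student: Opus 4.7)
The plan is to prove this via a double-counting argument applied to the set
\[ E = \{ (g,h) \in G \times G : gh = hg \}, \]
so $|E| = e(G)$. I will count $E$ in two ways: first by fixing $g$ and counting commuting $h$'s, and second by organizing the count around conjugacy classes.

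First, for each $g \in G$, the set of $h$ that commute with $g$ is exactly the centralizer $C_G(g)$. Hence
\[ e(G) = \sum_{g \in G} |C_G(g)|. \]
The goal is then to show that this sum equals $|G|\,c(G)$. I would do this by partitioning $G$ according to its conjugacy classes $C_1, \ldots, C_{c(G)}$: for $g$ lying in a class $C$, all elements of $C$ have centralizers of the same size (they are conjugate subgroups), and by the orbit-stabilizer theorem applied to the conjugation action, $|C| = |G|/|C_G(g)|$, equivalently $|C_G(g)| = |G|/|C|$. Hence
\[ \sum_{g \in G} |C_G(g)| \;=\; \sum_{C} \sum_{g \in C} |C_G(g)| \;=\; \sum_{C} |C| \cdot \frac{|G|}{|C|} \;=\; |G| \cdot c(G), \]
where the outer sum is over conjugacy classes. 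Combining the two identities gives $e(G) = |G| c(G)$.

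Equivalently, one could phrase the same argument as Burnside's (Cauchy--Frobenius) lemma for the conjugation action of $G$ on itself: the number of orbits is $c(G)$ and the fixed-point set of $g$ is $C_G(g)$, so $c(G) = \frac{1}{|G|}\sum_g |C_G(g)| = e(G)/|G|$. I do not anticipate any real obstacle; the only thing to be careful about is the orbit-stabilizer step, i.e., that the conjugation orbit of $x$ has size $[G:C_G(x)]$, which is standard. The argument is purely finite and uses no structure of $G$ beyond finiteness.
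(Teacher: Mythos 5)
Your proof is correct and is essentially the same as the paper's: both count ordered commuting pairs by summing centralizer sizes, group the sum by conjugacy classes, and invoke orbit--stabilizer in the form $|C(a)|\,|Z(a)|=|G|$ to conclude each class contributes $|G|$. The Burnside-lemma rephrasing you mention is just a repackaging of the same computation.
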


In the case of $H_{2n+1}(p)$, we have seen  
the conjugacy classes are determined by $2n$ entries; subtract the all-zero entry, and add the $p$ central elements.  So Fact \ref{et-fact} gives 
$e(H_{2n+1}) =  |H_{2n+1}| c(H_{2n+1}) = p^{2n+1} (p^{2n}+p-1)$. 
Dividing both sides by $p^{2(2n+1)}$ shows that if two elements of $H_{2n+1}(p)$ are chosen at random, the chance that they commute is about 
$1/p + (1/p^{2n})$. 

Section two contains background material on extraspecial $p$-groups, quasirandomness, `the' random graph, axioms for $H_\infty(p)$, 
symplectic spaces, and commuting graphs. 
In section three we give a `bare hands' proof that the commuting graphs of the Heisenberg groups $H_{2n+1}(p)$ 
are quasi-random when $n$ is large for fixed 
$p$. Section four shows that $H_\infty$ contains the random graph as an induced subgraph in a strong sense (it is essentially generated by it) 
and so enjoys many parallel properties of randomness. Section five connects these random properties of the Heisenberg group to the group 
$UT(n,p)$ -- uni-upper-triangular matrices with entries in $\mathbb{F}_p$. It begins with a separate literature review on the difficulties of describing the 
conjugacy classes or characters of $UT$. This may be read now for further motivation.


\br

\section{Background}

This section gives background material on Heisenberg and extraspecial groups (2.1), the random graph (2.2), quasirandomness (2.3), axioms (2.4), 
\possible{symplectic spaces (2.5),} and commuting graphs (2.6). 
Since this is an interdisciplinary effort we have made an effort to bring the various topics to life (and to articulate one new open problem). 

Note: in our main cases below, $p$ will be an odd prime. 

\subsection{Heisenberg and extra-special $p$-groups}

The construction of $\hn$ and $H_\infty$ given above makes sense for any ring. Over $\mathbb{R}$, the Heisenberg groups form a central part of analysis 
(if this seems like overselling, see \cite{hbook}). Over $\mathbb{Z}$, they are a basic ingredient of theta functions in many variables. See \cite{mumford}. We will work over the 
finite field $\mathbb{F}_p$. A finite group $G$ of order $p^N$ is extraspecial if $Z(G) = G^\prime = \Phi(G) \cong \mathbb{F}_p$.  Extraspecial $p$-groups were introduced by 
Philip Hall and had early success in the Hall-Higman paper on Burnside's problem \cite{h-h}. They are a standard topic in group theory texts 
\cite{aschbacher} \cite{suzuki} \cite{huppert}.  It is known that $N = 2n+1$ is forced.


There are two nonisomorphic such groups (fixing $n$, $p$): 
our Heisenberg groups $H_{2n+1}(p)$ 
(distinguished by having every element of order at most $p$) and the groups 
$M_{2n+1}(p)$ which have elements of order $p^2$.\footnote{\possible{There are also two extraspecial groups of order $2^{2n+1}$, but neither of them has 
exponent 2.}} For example $M_3(p)$ may be constructed as a semi-direct product, letting 
$C_p$ act on $C(p^2)$ by $j\cdot k = (1+jp)k \mod p^2$. $M_{2n+1}(p)$ may be constructed as a central product of $H_{2n-1}$ and $M_3(p)$. 
So the present constructions for $H_{2n+1}$ may be easily mirrored for $M_{2n+1}(p)$. Indeed, $M_{2n+1}(p)$ 
and $H_{2n+1}$ have the same character table. 
N.B. while there is only one $H_{2n+1}$ and only one countable $H_\infty$ there are two distinct countable $M_\infty$'s \cite{hall}.\footnote{corresponding to different completions of the theory.}

The Heisenberg groups occur throughout mathematics and physics; there is even a literature on probability for these groups. 
To explain, work in $H_3(p)$. A minimal generating set is $S = \{ [1,0,0], [-1,0,0], [0,1,0], [0,-1,0] \}$. A Markov chain on 
$H_3(p)$ based on this is:
\[ K(x,y) = 
\begin{cases}
\frac{1}{5} & \text{if } yx^{-1} \in S \cup \{ \id \} \\
0 & \text{otherwise.}
\end{cases}
\]
The work cited below shows that this Markov chain has a uniform stationary distribution and order $p^2$ steps are necessary and 
sufficient for convergence to stationarity. The proofs use the eigenvalues and eigenfunctions of the associated Laplace operator $L(x,y) = 
\mathcal{I} - K(x,y)$. The spectrum of the  Laplacian is a basic ingredient in understanding the geometry of the underlying space. 
See \cite{jorgenson}, \cite{liu}, and \cite{bump}, \cite{dh} which contain reviews.

In logic, the extraspecial $p$-groups have also been present for several decades; we mention a few central points. 
First, suppose $p \neq 2$.  
\cite{felgner} gave axioms $T_p$ for $H_\infty(p)$ and proved there is only one countable such group, up to isomorphism, see \S 2.4 below. 
It is easy to see that the infinite extraspecial $p$-groups 
are unstable in the sense of Shelah's classification theory \cite{shelah}: it suffices to find a formula  
$\vp(x,y)$ and disjoint sets of distinct elements $\{ a_i : i < \omega \}$, $\{ b_j : j < \omega \}$ such that $\vp(a_i, b_j)$ holds if and only if 
$i<j$. (Use the formula $[x,y]=1$ -- conveniently, our edge relation in $\Gamma$ -- and section three, or 
read the footnote on page \pageref{footnote-4}.) 
Thus, as already observed by Felgner, Shelah's theory applies to show 
that there are $2^\lambda$ pairwise nonisomorphic extraspecial $p$-groups of exponent $p$ of 
each uncountable size $\lambda$; a more direct construction is in \cite{sh-st}.   
(Some parallel results on the case of exponent $p^2$ are in \cite{hall}.)
The extraspecial $p$-groups are a useful example of a (very) simple unstable theory -- 
here ``simple'' can be read as both a model theoretic term and an 
English adjective -- especially in their alternate 
guise as symplectic spaces over finite fields. 
See the work of \cite{c-h}, and \cite{mac-st}.
In yet another direction, they were used by \cite{sh-st} to build so-called Ehrenfeucht-Faber groups, 
uncountable groups with every abelian subgroup of strictly smaller cardinality.

We conclude this discussion by stating what might be the first appearance of extraspecial $p$-groups.
\begin{theorem}[P. Hall; see Suzuki p. 75]
For prime $p>2$, let $G$ be a $p$-group with every characteristic abelian subgroup cyclic. Then $G$ is a central product of $H$ and $S$ where $H$ is a 
Heisenberg group and $S$ is cyclic. [N.B. $H$ may equal $\id$.] 
\end{theorem}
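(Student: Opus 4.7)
My plan is to construct an extraspecial subgroup $H \le G$ of exponent $p$ (a Heisenberg group) together with the cyclic subgroup $S := Z(G)$, and to verify the central product structure $G = H \cdot S$, $H \cap S = Z(H)$, $[H, S] = 1$. Immediately, $Z(G)$ is characteristic and abelian, hence cyclic by hypothesis; if $G = Z(G)$, take $H = \id$ and we are done, so assume $G$ is non-abelian.

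The technical heart is to prove that $G$ has nilpotency class exactly $2$ and $|G'| = p$. I would analyze the characteristic subgroup $N := \Omega_1(Z_2(G))$: since $[N, N] \le [Z_2(G), G] \le Z(G)$, and $N$ has exponent $p$ (using the class-$2$ identity $(xy)^p = x^p y^p [y,x]^{\binom{p}{2}}$ with $p$ odd so $p \mid \binom{p}{2}$, applied inside the class-$2$ subgroup $Z_2(G)$), $N$ itself has class $\le 2$ and exponent $p$. Its center $Z(N)$ is characteristic abelian in $G$, hence cyclic, forcing $|Z(N)| = p$ and $N$ either of order $p$ or an extraspecial Heisenberg group. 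Pushing the same analysis up the upper central series should rule out class $> 2$ (otherwise an analogous characteristic subgroup at the $Z_3$ level would produce a non-cyclic characteristic abelian configuration), and gives $G' = \Omega_1(Z(G))$ cyclic of order $p$. With class $2$ and $p$ odd, the map $g \mapsto g^p$ becomes a homomorphism into $Z(G)$, so $V := G/Z(G)$ is an elementary abelian $\mathbb{F}_p$-space, and the commutator induces a non-degenerate alternating form $V \times V \to G' \cong \mathbb{F}_p$; thus $V$ has even dimension $2n$ and carries a symplectic basis.

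I would then lift this symplectic basis to order-$p$ elements of $G$: given $g$ with $g^p \in Z(G)$, I seek $s \in Z(G)$ with $(gs)^p = g^p s^p = 1$, i.e.\ $g^p \in Z(G)^p$. This reduces to the subsidiary claim $G^p = Z(G)^p$, and is exactly where the characteristic-abelian-cyclic hypothesis is used in full: it fails in the $M$-type extraspecial groups precisely because their $\Omega_1$ is a non-cyclic characteristic abelian subgroup. Once the lift is available, let $H$ be generated by the lifted basis together with a generator of $G'$: $H$ is extraspecial of exponent $p$ with symplectic structure matching $V$, so $H \cong H_{2n+1}(p)$; $G = H \cdot S$ because the lift spans $V$; $H \cap S = Z(H)$ because $Z(H) \le Z(G) = S$ by construction while $H \cap Z(G) \le Z(H)$ tautologically; and $[H, S] = 1$ since $S = Z(G)$. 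The main obstacles are therefore (i) establishing class $2$ together with $|G'| = p$ as outlined above, and (ii) the lifting step $G^p = Z(G)^p$, which is the point that genuinely distinguishes the Heisenberg groups from the $M$-type extraspecial groups and their central-product extensions by cyclic groups.
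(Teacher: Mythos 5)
You should note at the outset that the paper does not prove this statement: it is quoted as a classical result of P.\ Hall, with a pointer to Suzuki p.\ 75, so your proposal has to stand on its own. Its scaffolding is sensible (take $S=Z(G)$; get a nondegenerate alternating commutator form on $V=G/Z(G)$; lift a symplectic basis to elements of order $p$; let $H$ be the subgroup they generate), and your local analysis of $N=\Omega_1(Z_2(G))$ is correct: it has exponent $p$ and class $\le 2$, $Z(N)$ is characteristic abelian in $G$ hence cyclic of order $p$, so $N$ is either $C_p$ or extraspecial of exponent $p$. The genuine gap is the step you yourself call the technical heart: that $G$ has class exactly $2$ with $|G'|=p$. ``Pushing the same analysis up the upper central series should rule out class $>2$'' is not an argument --- you do not say which characteristic subgroup at the $Z_3$ level you mean, why it would be abelian, or why it would be noncyclic, and this is precisely where all the difficulty of Hall's theorem is concentrated. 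It is worth knowing that the standard proofs (Suzuki; Gorenstein, Thm.\ 5.4.9) do not establish class $2$ first: they show that every automorphism of the extraspecial group $N=\Omega_1(Z_2(G))$ acting trivially on $N/Z(N)$ and on $Z(N)$ is inner, deduce $G=N\,C_G(N)$, and then show $C_G(N)$ has a unique subgroup of order $p$ and hence (as $p$ is odd) is cyclic; class $2$ and $|G'|=p$ come out of the decomposition rather than going into it. Your plan inverts the logical order at exactly the hard point, and as written there is no route to closing it.

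Your second acknowledged gap, the lifting claim $G^p=Z(G)^p$, is of a different character: it is only flagged, but it is fillable once class $2$ and $|G'|=p$ are in hand. Then $g\mapsto g^p$ is a homomorphism $G\to Z(G)$, so if its image exceeded $Z(G)^p$ it would equal $Z(G)$ (the subgroups of a cyclic group form a chain); then $K=\{g\in G: g^p\in Z(G)^p\}$ is a characteristic subgroup of index $p$, $W=K/Z(G)$ is a hyperplane of $V$, and since the form is alternating its one-dimensional perpendicular $W^{\perp}$ lies inside $W$; the preimage of $W^{\perp}$ is exactly $C_G(K)$, a characteristic abelian subgroup of order $p\,|Z(G)|$ all of whose $p$-th powers lie in $Z(G)^p$, so it cannot be cyclic, contradicting the hypothesis. (This is the general form of your $M_3(p)$ remark; note in passing that for $n\ge 2$ the subgroup $\Omega_1(M_{2n+1}(p))$ is nonabelian, and the offending noncyclic characteristic abelian subgroup there is $Z(\Omega_1)$, not $\Omega_1$ itself.) Your final assembly --- the exponent-$p$ lifts together with $G'$ generate a Heisenberg group $H$ with $G=HS$, $H\cap S=Z(H)$, $[H,S]=1$ --- is fine. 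So: correct frame and a correctly identified pair of key lemmas, but the first and central one is not established, and the sketch offered for it would not become a proof without an essentially new idea.
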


\subsection{The random graph}

This remarkable object appears in graph theory and logic. There is a wonderful survey, with many readable, full proofs, in Peter Cameron's fine \cite{cameron}. 
As motivation, fix a large $n$, say 100. Pick two Erd\H{o}s-Renyi $\frac{1}{2}$-graphs randomly. What is the chance they are isomorphic? Intuitively, ``small''. 
How small? Well, the number of possible one to one maps from $[n]$ to $[n]$ is $n!$ and the number of graphs is $({2})^{\binom{n}{2}}$ so it is at most 
$n!/2^{\binom{n}{2}}$, small indeed. 

Now, let $n = \infty$. Pick two Erd\H{o}s-Renyi $\frac{1}{2}$-graphs at random, independently (flip independent fair coins for each pair of vertices for each 
graph).  What is the chance they are isomorphic? The surprising answer: the chance is 1 (!). This accounts for the label ``the random graph.'' There are many non-random 
constructions. Let $S$ be the set of all primes that are $1 \mod 4$. Put an edge from $p$ to $q$ if $(\frac{p}{q}) = 1$. [The Legendre symbol $(\frac{p}{q})=1$ if $q$ is a 
quadratic residue mod $p$; by reciprocity, this happens if and only if $(\frac{q}{p}) = 1$ so the graph is undirected.] The resulting graph is (isomorphic to) the random graph. \footnote{\possible{Added when updating the manuscript: In response to the present paper, 
to further investigate the discontinuity in passing from Erd\H{o}s-Renyi graphs $G(n,\frac{1}{2})$ 
to the random graph $\mcr$, 
Chatterjee and Diaconis asked in \cite{c-j}: Pick independent $\Gamma_1, \Gamma_2$ $G(n, \frac{1}{2})$ graphs. What is the size of the largest 
induced isomorphic subgraph? They show it has order $4 \log_2 n$.} 

In a similar vein, Alon \cite{alon} had shown that a random graph in $G(N, \frac{1}{2})$ is universal in containing all subgraphs of order $k$, where 
$k$ is of order $\log (N)$ (he has more precise results).}

These graphs are robust. If $\mcr$ is the random graph and $\mcr^\prime$ is obtained from $\mcr$ by deleting any finite number of vertices and edges, then 
$\mcr^\prime \cong \mcr$. If the vertex set is decomposed into finitely many subsets, the induced graph on at least one subset is isomorphic to $\mcr$. 

The random (Rado) graph has a universality property; it contains every finite or countable graph as an induced subgraph. 
In particular, it contains a copy of the infinite 
complete graph and an infinite graph with no edges. The Rado graph is highly symmetric: given any two finite isomorphic induced subgraphs there is an automorphism of 
$\mcr$ extending this isomorphism. 

Logicians have long studied $\mcr$ -- after all, a graph is just a symmetric binary relation. Say that a graph property $\mbp$ holds in almost all finite graphs if the 
proportion of $N$-vertex graphs where $\mbp$ holds tends to $1$ (equivalently, if an Erd\H{o}s-Renyi $\frac{1}{2}$-graph has property $\mbp$ with probability tending 
to $1$). It is known that for any first order property in the language of graphs, $\mbp$ holds in $\mcr$ if $\mbp$ holds in almost all (finite) graphs. 
[This can be said as a zero-one law: let $\mbp$ be a sentence in the first-order language of graphs. Then $\mbp$ holds in almost all graphs or in almost no graphs.]  We hasten to add that many interesting 
properties of graphs are not first order, connectedness and hamiltonicity, for instance; and much random graph theory is done with the chance of an edge tending to 
zero as $n$ tends towards infinity. For much more on logic and graph theory, see \cite{spencer}.

Above, we explained how the study of a random walk and the associated Laplacian gives insight into the geometry of a space. 
There is a curious random walk on $\mcr$ leading to open math problems and new understanding of the structure of $\mcr$. 
To explain, it is easiest to use the following isomorphic description of $\mcr$: Let $\mathbb{N} = \{ 0, 1, 2, \dots \}$. Make a graph on 
$\mathbb{N}$ via: for $i<j$, set $i\sim j$ if and only if the $i$-th bit of $j$ (in its binary expansion) is a $1$. Thus, for instance, 
$0$ is connected to all odd numbers and $1$ is connected to $0$ and 
$1$ is connected to all $j$ equivalent to $2$ or $3$ $\mod 4$.  
For the walk, fix $Q(j) > 0$, $\sum_{j \in \mathbb{N}} Q(j) = 1$ a positive probability on $\mathbb{N}$. For $j \in \mathbb{N}$, let 
$N(j) = \{ k : k \sim j \}$ be the neighborhood of $j$. For definiteness, say $Q(j) = \frac{1}{2^{j+1}}$. (The same story works for 
any model of $\mcr$ and the rate of convergence would be the same; of course this depends on the measure.) 
Define a Markov chain on $\mathbb{N}$ by: 
\[ K(i,j) = 
\begin{cases}
Q(j)/Q(N(i)) & \text{if } j \sim i \\
0 & \text{otherwise}.
\end{cases}
\]
Thus, from $i$, pick $j$ among the neighbors with probability $Q(j)$ (normalized to the neighbors). 

We were surprised to find this walk has a simple stationary distribution $\Pi$ (so $\sum_{i \in \mathbb{N}} \Pi(i) K(i,j) = \Pi(j) )$. 
That is, 
\[  \Pi(i) = Z^{-1} Q(i)Q(N(i))         \]
for $Z$ a normalizing constant. Indeed $\Pi(i)K(i,j) = \Pi(j)K(j,i)$ as is easy to check, so 
\[ \sum_i \Pi(i)K(i,j) = \sum_i \Pi(j)K(j,i) = \Pi(j). \]
Standard theory says that, for any $i,j$, $K^\ell(i,j) \rightarrow \Pi(j)$ as $\ell \nearrow \infty$. (See \cite{levin}.)  Here  
$K^\ell(i,j) = \sum_k K(i,k) K^{\ell-1}(k,j)$ is the chance of going from $i$ to $j$ in $\ell$ steps. 
The question is to determine the \underline{rate} of this convergence. Let
\[   \| K^\ell_i - \Pi   \| = \frac{1}{2} \sum |K^\ell(i,j) - \Pi(j) |.    \]
We want to know how large to take $\ell$ so $\| K^\ell_i - \Pi \| < \epsilon$, and how it depends on the starting state $i$: 
each $i$ is connected to half of the points in $\mathbb{N}$ and the diameter of $\mcr$ is two.  So, convergence to stationarity might be very rapid 
(a bounded number of steps suffice to make $\| K^\ell_i - \Pi \| < \epsilon$ for all $i$). On the other hand, if $i$ begins 
with many zeroes, it can take a long time for the chain to get close to zero. We don't know and hope one of our readers 
will report. 


\sbr

\possible{
The following computation shows that the starting state matters. 

It is easiest to use the Boolean model for $\mcr$;  the vertex set is $\{ 0, 1, 2, \dots \}$ and for $i<j$ there is an undirected edge from 
$i$ to $j$ if the $i$th bit of $j$ is a $1$. Take the driving measure to be $Q(j) = \frac{1}{2^{j+1}}$. (As explained below, the choice of $Q$ 
doesn't matter much; essentially the same argument works for $Q(j) = \frac{1}{(j+1)(j+2)}$, $0 \leq j < \infty$.) 

The transition density is $K(i,j) = Q(j)/Q(N(i))$. The stationary distribution is $\Pi(i)= Q(i)Q(N(i))/Z$ with $Z$ a normalizing constant. 
Observe that $\Pi(0) = \frac{\frac{1}{2} \cdot \frac{1}{3}}{Z}$. Since $\Pi(0) < 1$, $1 < \frac{1}{Z} < 6$. 
Let 
\[ 2^{(k)} \mbox{ denote } 2^{2^{\cdot^{\cdot^{\cdot^2}}}} \]  
with the exponentiation iterated $k$ times. So $2^{(1)} = 2$, $2^{(2)} = 4$, $2^{(3)} = 16$ 
and we stipulate $2^{(0)} = 1$. Below, the easily proved inequality $2^{(k-1)} \leq 2^{(k)} - 2^{(k-1)} \leq 2^{(k)}$ will be used. 
The idea of the argument is simple. Start the Markov chain at $2^{(k)}$. The only smaller $j$ connected to $2^{(k)}$ is $2^{(k-1)}$. 
The larger $j$'s have super-exponentially small probability. Thus, in the first step, the walk goes $2^{(k)} \rightarrow 2^{(k-1)}$ with 
probability super-exponentially close to $1$. Similarly, it goes $\rightarrow 2^{(k-2)} \rightarrow 2^{(k-3)} \rightarrow \cdots \rightarrow 2^{(k-\ell)}$ 
in the first $\ell$ steps. For any fixed $\ell$, the walk started at $2^{(k)}$ is most likely \emph{not} at $0$. But 
$\Pi(0) \geq \frac{1}{6}$ so the walk cannot have converged after $\ell$ steps.

\begin{prop} \label{rg-prop}
With notation as above, 
\[ P_{2^{(k)}} (X_\ell = 2^{(k-1)}) \geq 1-\frac{1}{2 \cdot 2^{(k-\ell)}} ~~~. \]
\end{prop} 

\begin{cor} \label{rg-cor}
For $k > \ell \geq 2$, 
\[    \|  K^\ell_{2^{(k)}} - \Pi  \|  \geq \frac{1}{6} - \frac{1}{2 \cdot 2^{(k-\ell)}} ~~~.      \]
\end{cor}

\begin{proof}[Proof of Corollary \ref{rg-cor}]
By definition  $ \|  K^\ell_{2^{(k)}} - \Pi  \|  = \sup_A  \|  K^\ell(2^{(k)}, A) - \Pi(A)  \|$.   Take $A = \{ 0 \}$. Then 
\[  \|  K^\ell_{2^{(k)}} - \Pi  \| \geq \Pi(0) - K^\ell(2^{(k)}, 0) \geq \frac{1}{6} - \frac{1}{2 \cdot 2^{(k-\ell)}} ~~~.     \]
\end{proof}

\begin{proof}[Proof of Proposition \ref{rg-prop}] 
Let $N^+ (2^{(k)}) = \{ j > 2^{(k)} $ with $j \sim 2^{(k)} $ in $\mcr \}$.   So 
\[ Q(N^+(2^{(k)})) \leq \sum_{j > 2^{(k)}} Q(j) = Q(2^{(k)}) = \frac{2}{2^{(k+1)}}. \]
Also, 
\[ K(2^{(k)}, 2^{(k-1)}) = \frac{ 1/ (2 \cdot 2^{(k)} ) } { 1/(2 \cdot 2^{(k)})  + Q(N^+(2^{(k)}))}  \geq \frac{1}{1+\frac{2^{(k)}}{2^{(k+1)}}} \geq 1-\frac{ 2^{(k)} }{ 2^{(k+1)} }. \]
(Use $\frac{1}{1+x} > 1-x$ for $x >0$.) Using $(1-x)(1-y) \geq 1-x-y$, 
\begin{align*}
P_{2^{(k)}}\left( X_\ell = 2^{(k-\ell)} \right) \geq & ~ 1 - \frac{ 2^{(k)} }{ 2^{(k+1)} } - \frac{ 2^{(k-1)} }{ 2^{(k)} }
 - \cdots - \frac{ 2^{(k-\ell)} }{ 2^{(k - \ell+1)} } \\
 \geq & ~ 1 - \frac{1}{2^{(k)}} - \cdots - \frac{1}{2^{(k-\ell)}} \geq \frac{1}{2 \cdot 2^{(k - \ell)} }
\end{align*}
which completes the proof.
\end{proof}

\begin{rmk-star} 
\emph{The arguments above are robust. They work if the starting state $2^{(k)}$ is replaced by any $j$ with first non-zero bit in position $2^{(k-1)}$. 
They work if the measure $Q$ is replaced by any monotone decreasing probability, e.g. $1/(j+1)(j+2)$.}  
\end{rmk-star}
}


\subsection{Quasirandomness}

We now turn to \emph{finite} random graphs.   Although, as we have just 
observed, there are many nonisomorphic Erd\H{o}s-Renyi $\frac{1}{2}$-graphs of a fixed finite size $N$, they generally share a series of rather special properties.  
For example, almost always (as $N$ grows),  
the edge distribution between pairs of reasonably sized subsets of 
vertices is regular in the sense of Szemer\'edi's regularity lemma; most vertices have degree about $\frac{n}{2}$; 
all labeled graphs on a fixed finite number of vertices occur asymptotically with the same frequency -- 
say, (labeled) $4$-cycles are not less frequent than cliques of size $4$ or independent sets of size $4$.

One of the remarkable discoveries of \cite{c-g-w}, \cite{thomason} was that it is possible to identify a set of such (asymptotic) 
graph properties, a priori quite different from each other but shared by 
random graphs, which are in fact equivalent to each other 
in the sense that any graph satisfying one of these properties must necessarily satisfy all of them. Such graphs are called \emph{quasi-random}. 
Quasi-random graphs form a broader class than random graphs, but nonetheless this framework gives a powerful and often easily checkable means of speaking approximately or 
asymptotically of random behavior. 

There is by now an extensive literature on this subject. 
Some variously formulated lists of the equivalent properties can be found in \cite{c-g-w} \S 3 (note that for ease of exposition they restrict there to edge probability $\frac{1}{2}$) or in \cite{sudakov} Theorem 2.6 or \cite{gowers} Theorem 2.1.  For a textbook development see \cite{lovasz}.

\sbr

\subsection{First-order axioms}  
When considering the infinite Heisenberg groups, of which there are many of arbitrarily large infinite size, 
we switch from writing $H_\infty$ to $H_\omega$ to indicate \emph{the} one which is countably infinite. Here we indicate why this is justified for countable, though as already noted above,  
not uncountable sizes, by way of reviewing axioms and notation. 

Recall that for $p$ an odd prime, 
\cite{felgner} gave a set of first-order axioms $T_p$ in the language of groups (there is a binary function symbol $\times$ and 
a constant $1$) which hold in $H_\omega(p)$ and 
which express: the axioms for group theory (associativity: for all $x$, $y$, $z$ we have $x\times(y \times z) = (x \times y) \times z$, 
identity: for all $x$, $x \times 1 = 1 \times x = x$, existence of inverse: for all $x$ there exists $y$ such that $x \times y = y \times x = 1$),  that the center is a cyclic group of order $p$, that the derived group is contained in the center and is not trivial, and 
(infinitely many axioms expressing that) the factor group modulo the center is infinite.  Note this avoids direct reference to the Frattini subgroup, the intersection of all 
maximal subgroups, which is not obviously 
first-order expressible since it quantifies over subgroups, rather than elements. 

Logicians have various notions of equivalence that are subtly different.  For a model $M$, 
write $M \models T$ to mean that the set of axioms $T$ all hold in $M$. 
($T$ for {theory}, which is just a set of axioms, here always of first-order logic.) 
Write $M \equiv N$, pronounced elementarily equivalent, to mean that exactly the same axioms hold in $M$ and in $N$. Write $M \cong N$ 
to mean they are isomorphic. $M \cong N$ implies $M \equiv N$, but the reverse fails strongly by the upward and downward L\"owenheim-Skolem 
theorems: if $M$ is an infinite model, say in a countable\footnote{If larger, we may only have models of any size greater or equal to the size of the language.} language, then there is at least one $N \equiv M$ of 
every other infinite size. (See \cite{c-k} for standard model theory.) 
So to have a chance of determining the structure up to isomorphism we must specify not only the theory but the size, but often this is not enough: 
for example, there are many pairwise 
non-isomorphic algebraically closed fields of characteristic zero, and countable size: one of transcendence degree $n$ for 
every $n \in \mathbb{N}$, and one of countable transcendence degree. (However, there is just one of any fixed uncountable size, because 
in that case the transcendence degree and thus the isomorphism type is determined.)  When it is enough -- when there is, up to isomorphism, 
precisely one way to satisfy the axioms of $T$ on a set of size $\kappa$ --  $T$  is called $\kappa$-categorical. 

Summarizing ``there is exactly one countably infinite extraspecial $p$-group of exponent $p$'' 
in our notation: Felgner proved that $T_p$ is $\aleph_0$-categorical, 
meaning that if $M \equiv H_\omega$ (or just: $M \models T_p$) and $M$ 
is also countably infinite, then $M \cong H_\omega$. (The background statement is that if $p$ is an odd prime, $H$ is a nonabelian finite or countable 
$p$-group of exponent $p$ such that $H^\prime = Z(H)$ is cyclic and $H/Z(H)$ is elementary abelian, then whenever $D_1, D_2$ are finite 
subgroups of $H$ which are both extraspecial and both of the same size, every isomorphism from $D_1$ onto $D_2$ can be extended to an 
automorphism of $H$.) 
It follows that the theory $T_p$ is complete, meaning that for any other first-order sentence $\vp$ in its language, 
either $\vp$ or $\neg \vp$ follows from $T_p$.

A note on first-order logic. Part of the art (in mathematics) is of course finding a balance between having 
enough expressive power to prove interesting theorems but not so much as to prevent abstraction.   First order logic, in this sense, seems not unlike  
linear algebra: much interesting mathematics initially appears to escape it, but as, say representation theory shows for linear algebra 
(or, say, classification theory for first-order logic), already there is a remarkable explanatory power.

\sbr
\possible{
\subsection{Symplectic spaces} \label{s:sym} 
There is a helpful correspondence between our extraspecial $p$-groups and non-degenerate symplectic spaces over $\mathbb{F}_p$, 
which we outline here with the referee's encouragement (and will refer to in section four). 
For more details see the excellent account in \cite{tomkinson}, pages 49--53.

In the first direction, we are given an extraspecial $p$-group $H$ of exponent $p$ and we would like to find a 
nondegerate symplectic space over $\mathbb{F}_p$. 
Recall from above that the commutator is a map from 
$H \times H \rightarrow Z(H)$ and only depends on the conjugacy classes, i.e., on the cosets of the center: 
the commutator of 
$[x,y,z]$ and $[a,b,c]$ is $[0,0, xb-ay]$. Recall also that the quotient group mod the center $G = H/Z(H)$ is elementary abelian so, written 
additively, can be thought of as a vector space $V$ over $\mathbb{F}_p$. [For the rest of this paragraph, write elements of $V$ as $(u,v)$, i.e. 
$(u,v) \in V$ is the image of $[u,v,*] \in H$ in $H/Z(H)$, now using round parentheses  
to not conflict with the notation for commutator.] 
Let $g = [0, 0, 1] \in Z(H)$ (or any other generator of the center). Define   
$F : V \times V \rightarrow \mathbb{F}_p$ by sending $( (u_1, u_2), (v_1, v_2)) \mapsto k$ for the unique $0 \leq k \leq p-1$ such that 
the commutator  $[ [u_1, u_2, *], [v_1, v_2, *] ] = [0, 0, k] = g^k$. 
Then $F$ is (i) bilinear, (ii) \emph{alternating} meaning that $F( (x_1, x_2), (x_1, x_2) ) = 0$ (recall each $[x_1, x_2, *]$ commutes with 
all elements of its conjugacy class), 
and (iii) \emph{nondegenerate} meaning that for every nonzero $(u_1, u_2) \in V$ there is $(v_1, v_2) \in V$ 
such that $F( (u_1, u_2), (v_1, v_2) ) \neq 0$.  

Compare our $\tilde{\Gamma}(H)$, whose edges and non-edges reflect whether $F$ is zero. 

In the second direction, we have a nondegenerate symplectic space $V$ over $\mathbb{F}_p$, given with a basis 
$\{ v_i : i \in I \}$ and an alternating bilinear map $F$, and we would like to 
find an extraspecial $p$-group of exponent $p$. Informally, we try to reverse the earlier quotient by remembering a center. 
Let $H$ be the group formally given by generators $\{ g \} \cup \{ v_i : i \in I \}$ subject to the relations $g^p = {v_i}^p = [ v_i, g] = 1$ and 
$[v_i, v_j] = g^{F(v_i, v_j)}$. Most properties are easily checked; the proof that these relations don't collapse goes   
by finding an explicit construction of a nonabelian group which satisfies them 
(see e.g. \cite{tomkinson} p. 52, or \cite{sh-st}).

In this sense our Heisenberg groups and symplectic spaces $(\operatorname{mod} p)$ are equivalent.
}

\sbr

\subsection{Commuting graphs} \label{s:c-g} 
The commuting graph of a finite group, our $\Gamma$ of the introduction, also has an extensive literature.  
\cite{pyber} includes many early references; a very recent survey of many kinds of graphs on groups, 
including the commuting graph, is \cite{cameron-2}. 
There is a direct connection to randomness in the question: Let $G$ be a finite group. Pick two elements in $G$ at random, what is the chance 
they commute? An early theorem in the subject shows that, for $G$ non-abelian, this is at most $5/8$ (and this is sharp). For simple groups 
 \cite{gr} show that this chance is at most $1/2$. For non-abelian solvable groups 
the chance is at most $1/12$. This last paper surveys a surprisingly deep literature. 

We continue this discussion with two comments.  
First,  important early questions in this area are suggested by the title of \cite{e-s}: ``How abelian is a finite group?'' and the 
above-mentioned companion paper 
\cite{pyber} of the same name. 
The maximal cliques of the commuting graph $\Gamma$ correspond to maximal abelian subgroups, however, 
the picture is a priori different from Ramsey's theorem: 
Pyber proves that every group of
order $n$ contains an abelian subgroup of order at least $2^{\epsilon \sqrt{\log n}}$ for some $\epsilon > 0$ and that this  
result is essentially best possible.  

Second, we include the short: 
\begin{proof}[Proof of Fact \ref{et-fact} \emph{(Erd\H{o}s-Tur\'an)}] 
The number of ordered pairs of commuting elements whose first element is $a$ is $|Z(a)|$, the size of the 
centralizer of $a$. This number is the same for any $a^\prime \in C(a)$, the conjugacy class of 
$a$. So the number of ordered pairs of commuting elements with first element conjugate to $a$ is $|C(a)| |Z(a)| = |G|$. 
Sum over $c(G)$ conjugacy classes to get $|G| c(G)$ ordered pairs. 
\end{proof}

\vspace{8mm}



\begin{center}

\includegraphics[width=100mm]{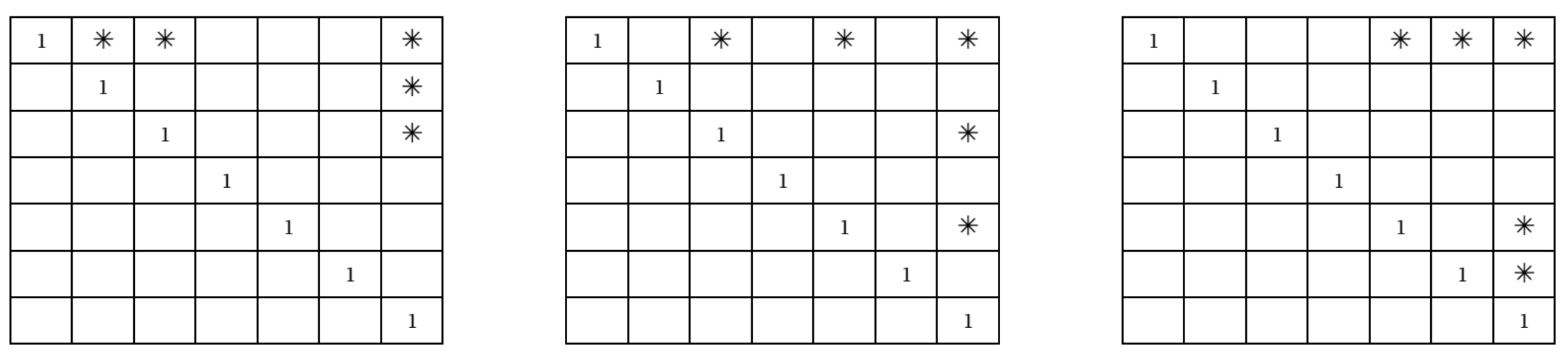}

\textcolor{gray}{\small{\textsc{Figure 1.} Some copies of $H_{2m+1}$ in $H_{2n+1}$ for $n=5, m=2$.}}

\end{center}

\newpage


\section{Quasirandomness}

This section gives a standalone proof that the sequence of commuting graphs of the Heisenberg group is quasi-random. Some of the 
properties that follow have other explanations, see for instance section 4 below. 

Recall the graph $G = \tilde{\Gamma}(H_{2k+1})$ discussed above has vertices the non-central conjugacy classes 
$C_{x,y}$ with an edge from $C_{x,y}$ to $C_{a,b}$ if the elements of these two classes commute. Combining $x$ and $y$ into a 
vector of length $2k$ it is convenient to identify vertices with functions from $\{ 1, \dots, 2k \}$ to $p$, 
including the all-zero function 
so $n = |V(G)| = p^{2k}$. For any $v \in V(G)$ and $1 \leq i \leq 2k$, we may write $v(i)$ for the $i$-th entry of $v$. 
Let the support of $v$, $\supp(v) = \{ i : 1 \leq i \leq 2k, v(i) \neq 0 \}$. Let $N(v)$ denote the neighborhood of $v$.  
We emphasize a notational point: in this section, $k$ is from the subscript to $H$ and $n$ is the size of the graph. 

\begin{claim} \label{c:count1} Given $k$ and $p$, we have that: 
\begin{enumerate}
\item Every $v \in V(G)$ other than the constant-zero element has degree $d = p^{2k-1}$.
\item The density $\delta$ of $G$ is $\approx \frac{1}{p}$, more precisely, $\delta = \frac{1}{p} + \epsilon$ where $\epsilon = \epsilon(k) < \frac{1}{p^{2k}}$. 
\item For $v,w \in G$, we can have:
\begin{enumerate}
\item $|N(v) \cap N(w)| = p^{2k}$, if $v=w$ is the constant-zero element. 
\item  $|N(v) \cap N(w)| = p^{2k-1}$, if $v=w$ is not the constant-zero element, or if exactly 
one of $v,w$ is the constant-zero element, or if $v$ is nonzero and $w$ is a nonzero multiple of it. 
\item $|N(v) \cap N(w)| = p^{2k-2}$ otherwise. 
\end{enumerate}
\end{enumerate}
\end{claim}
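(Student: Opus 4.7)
The plan is to reduce everything to elementary linear algebra on a symplectic form. Identify $V(G)$ with $\mathbb{F}_p^{2k}$ via $v = (x,y)$ with $x,y \in \mathbb{F}_p^k$, and set $\omega(v,w) = x \cdot y' - x' \cdot y$ for $w = (x',y')$, so that the commuting condition $xb - ay = 0$ from the introduction becomes $\omega(v,w) = 0$. Since $\omega$ is a nondegenerate alternating bilinear form, for each fixed $v$ the map $f_v = \omega(v, \cdot)$ is a linear functional on $\mathbb{F}_p^{2k}$ that vanishes identically iff $v = 0$, and otherwise has kernel of dimension $2k-1$. All three parts of the claim will fall out of this observation.

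For (1), I would simply note $N(v) = \ker(f_v)$, which has size $p^{2k-1}$ whenever $v \neq 0$. For (2), I would count ordered pairs $(v,w)$ with $\omega(v,w) = 0$: the zero vertex contributes $p^{2k}$ such pairs, and each of the $p^{2k} - 1$ nonzero vertices contributes $p^{2k-1}$, for a total of $p^{2k} + (p^{2k}-1)p^{2k-1}$. Dividing by $p^{4k}$ gives
\[
\delta = \frac{1}{p} + \frac{1}{p^{2k}}\left(1 - \frac{1}{p}\right),
\]
so the error $\epsilon$ is strictly less than $p^{-2k}$ as asserted.

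For (3) the key input is that, because $\omega$ is nondegenerate, the functionals $f_v$ and $f_w$ are proportional iff the vectors $v$ and $w$ are proportional (viewing $0$ as proportional to everything). I would then break into cases according to $\dim \mathrm{span}(f_v, f_w) \in \{0, 1, 2\}$: dimension $0$ forces $v = w = 0$, giving $|N(v) \cap N(w)| = p^{2k}$; dimension $1$ covers $v = w \neq 0$, the case where exactly one of $v, w$ is zero, and $w = \lambda v$ with $\lambda \neq 0$, all of which reduce to intersecting with a single hyperplane of size $p^{2k-1}$; and dimension $2$ (i.e. $v, w$ both nonzero and linearly independent) gives a codimension-$2$ subspace of size $p^{2k-2}$.

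There is no real obstacle here: the entire statement is a symplectic-linear-algebra exercise once the edge relation is rewritten as $\omega(v,w) = 0$. The only thing to handle carefully is the bookkeeping around the zero vertex (which is adjacent to everything and thus disturbs uniformity by an additive term of order $p^{-2k}$), and, in part (3), the verification that nondegeneracy of $\omega$ lets one read off linear dependence of the functionals $f_v, f_w$ from linear dependence of $v, w$, so that the case split is genuinely exhaustive.
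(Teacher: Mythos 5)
Your proposal is correct, and it reaches every number in the claim, but it is organized differently from the paper's argument. The paper proves the claim by a ``bare hands'' coordinate count: part (1) fixes an index $i$ in the support of $v$ and observes that the remaining $2k-1$ coordinates of a neighbor may be chosen freely; part (2) is the same ordered-pair count you do (and yields the same error term $\frac{p-1}{p^{2k+1}}$); and part (3) is a case analysis on supports --- an index in one support but not the other, equal supports without proportionality, or proportional vectors --- in each case counting how many coordinates of a common neighbor remain free after the forced ones. Your route replaces that case analysis by the single observation that the edge relation is $\omega(v,w)=0$ for the standard nondegenerate symplectic form, so $v\mapsto f_v=\omega(v,\cdot)$ is an injective linear map into the dual space, $N(v)=\ker f_v$, and $|N(v)\cap N(w)| = p^{2k-\dim\mathrm{span}(f_v,f_w)} = p^{2k-\dim\mathrm{span}(v,w)}$; the three cases of (3) are then exactly $\dim\mathrm{span}(v,w)\in\{0,1,2\}$. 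The key point you rightly flag --- that nondegeneracy makes proportionality of $f_v,f_w$ equivalent to proportionality of $v,w$, so the split is exhaustive --- is the step that subsumes the paper's subcases (i)--(iv) at once. What each approach buys: yours is shorter, uniform, and ties into the ``symplectic space over a finite field'' picture the paper itself mentions in \S 2.1; the paper's explicit coordinate version stays entirely self-contained (in keeping with the stated aim of a standalone proof of quasirandomness) and, along the way, records the counts of how many pairs fall into each case (e.g.\ the $(n-1)(p-2)$ proportional pairs), which are then quoted in the proof of Theorem \ref{t:psr}; if you continue with your formulation you would want to extract those pair counts as well, though they are not part of the claim as stated.
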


\begin{proof}

(1) For any $v \in G$ which is not constantly zero, we can count the elements $w$ which commute with $v$ by fixing some $i \in \supp(v) \neq \emptyset$ and choosing 
$w(j)$ freely for $j \neq i$.   Thus $\deg(v) = p^{2k-1}$. 

(2) Counting ordered pairs, compute the density of $G$ by $( (n-1)p^{2k-1} + p^{2k} ) / n^2 = \frac{1}{p} + \frac{p-1}{p^{2k+1}} < \frac{1}{p} + \frac{1}{p^{2k}}$, remembering $n = p^{2k}$. 

(3) Case (a) is clear. Otherwise, one of the following must be true:  
\begin{itemize}
\item[(i)] we can choose $1 \leq i, j \leq 2k$ such that $i \in \supp(v)$, $j \in \supp(w) \setminus \supp(v)$. In this case $|N(v) \cap N(w)| = p^{2k-2}$.
[Looking for $x$ commuting with both, we may choose $x(\ell)$ for $\ell \neq i,j$ freely, but the value of $x(i)$ is forced by commuting with $v$ and subsequently the 
value of $x(j)$ is forced by commuting with $w$.]

\item[(ii)] the parallel to (i): we can choose $1 \leq i,j \leq 2k$ such that $i \in \supp(w)$, $j \in \supp(v) \setminus \sup(w)$, and the proof is the same.   

\item[(iii)] not (i) or (ii), so $\supp(v) = \supp(w)$, but one is not a multiple of the other.  In this case, to find $x$ commuting with both, we are solving two equations in $|\supp(u)| \geq 2$ unknowns, so we are free to choose $p^{2k-2}$ coordinates of $x$ as we like. 

\item[(iv)] $v$ and $w$ are nonzero multiples of each other; 
in this case, $N(v) \cap N(w) = N(v) = N(w)$ and there are $p^{2k-1}$ such $x$. 
Note that there are $(n-1)(p-2)$ 
 such pairs $(v,w)$ with $v \neq w$. 
\end{itemize}

This completes the proof. 
\end{proof}

Our graph is thus close to being regular: all vertices but one have the same degree $p^{2k-1}$ (and the remaining one has degree $p^{2k}$). 
Recall that a graph is called strongly regular if there are $A, B$ such that if $v, w$ have an edge between them then $|N(v) \cap N(w)| = A$, and 
if $v, w$ do not have an edge between them then $|N(v) \cap N(w)| = B$. 
Our $G$ is in in some sense also close to being strongly regular.

\begin{theorem} \label{t:psr}
The sequence of graphs 
\[ \langle \tilde{\Gamma}(H_{2k+1}) : k \rightarrow \infty \rangle \] 
is quasirandom. 
\end{theorem}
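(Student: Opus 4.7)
My plan is to deduce quasirandomness directly from Claim \ref{c:count1} by verifying the codegree variance condition, one of the standard equivalent formulations of quasirandomness in \cite{c-g-w} (see \cite{lovasz} for a textbook treatment). Writing $n = p^{2k}$ and $d = d_k$ for the density of $\tilde{\Gamma}(H_{2k+1})$ (so $d_k \to 1/p$ by Claim \ref{c:count1}(2)), it suffices to show
\[ \sum_{v,w \in V(G)} \bigl(|N(v) \cap N(w)| - d^2 n \bigr)^2 = o(n^4). \]

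The key observation, already visible in Claim \ref{c:count1}(3)(c), is that for ``generic'' pairs $(v,w)$ the codegree equals exactly $p^{2k-2}$, which (up to a negligible fluctuation handled below) is exactly $d^2 n$. So generic pairs contribute essentially nothing to the sum. All the work lies in bounding the contribution from the exceptional pairs identified in (3)(a) and (3)(b). There is the single pair $(0,0)$ with codegree $p^{2k}$, contributing $O(p^{4k})$; and there are $O(pn)$ pairs from case (b) --- diagonal pairs, pairs involving the zero vertex, and nonzero scalar multiples, totaling $(n-1)(p+1)$ --- each with codegree $p^{2k-1}$, contributing in total $O(pn \cdot p^{4k-2}) = O(p^{6k-1})$. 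Both quantities are $o(n^4) = o(p^{8k})$, as required.

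One minor step deserves care: since $d_k$ is only asymptotically $1/p$, replacing $d_k^2 n$ by $p^{2k-2}$ for the generic pairs introduces a correction. A direct computation using Claim \ref{c:count1}(2) gives $d_k^2 n - p^{2k-2} = O(1/p)$, so the squared correction is $O(1/p^2)$ per pair, and summing over $O(n^2)$ pairs gives total $O(n^2/p^2) = O(p^{4k-2})$, again $o(n^4)$. Once this is in place, the Chung-Graham-Wilson equivalence theorem delivers the full suite of quasirandomness properties (edge distribution, $C_4$ counts, spectral gap, etc.) automatically.

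The main ``obstacle'' here is really just careful bookkeeping: Claim \ref{c:count1} has done all the combinatorial heavy lifting, and only the tracking of lower-order error terms remains. A more conceptual alternative would be to diagonalize the adjacency matrix of $\tilde{\Gamma}(H_{2k+1})$ using the symplectic pairing on $\mathbb{F}_p^{2k}$ coming from the commutator, and check that all nontrivial eigenvalues are small (this is essentially the eigenvalue version of Chung-Graham-Wilson); but for the present theorem the direct codegree route is simpler and fully self-contained.
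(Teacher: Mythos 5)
Your proposal is correct and follows essentially the same route as the paper: both deduce the theorem from Claim \ref{c:count1} by verifying a Chung--Graham--Wilson-type codegree deviation condition, with identical bookkeeping for the generic pairs of case (3)(c), the single zero-zero pair, and the $O(pn)$ exceptional pairs of case (3)(b). The only difference is cosmetic: the paper checks the $L^1$ form $\sum_{v,w}\bigl||N(v)\cap N(w)|-\delta^2 n\bigr|=o(n^3)$ (item P7 of \cite{sudakov}), while you check the equivalent squared-variance form with bound $o(n^4)$.
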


\begin{proof}
It will suffice, see e.g. the formulation of \cite{sudakov} Theorem 2.6 item P7, to  
verify 
that for our density $\delta = \frac{1}{p} + \epsilon$,
\begin{equation} \label{e:sum}
\sum_{v, w \in G} | |N(v) \cap N(w)| - \delta^2 n| = o(n^3). 
\end{equation}
By Claim \ref{c:count1} this sum has several kinds of components. For each of the pairs $(v,w)$ in case \ref{c:count1}(3)(c), we have 
$| |N(v) \cap N(w) | - \delta^2 n| = | p^{2k-2} - (\frac{1}{p} + \epsilon)^2 n| = | n/p^2 - n/p^2 - \epsilon^2 n - 2\epsilon/n | 
< | \frac{1}{n} + \frac{2}{p}|$, recalling that $\epsilon < \frac{1}{p^{2k}} = \frac{1}{n}$. There are fewer than $n^2$ such pairs, so all together 
$n^2 (\frac{1}{n} + \frac{2}{p}) = o(n^3)$.  Otherwise, we have one $(v, w)$ in case $\ref{c:count1}(3)(a)$, and 
$(n-1) + (n-1) + (n-1)(p-2) 
 \leq p^2 n$ pairs in $\ref{c:count1}(3)(b)$.
So the left side of equation (\ref{e:sum}) for these pairs is bounded above by 
$| p^{2k} - \delta^2 n| + p^2 n |p^{2k-1} - \delta^2 n| = |n - \delta^2 n| + p^2 n |\frac{n}{p} - \delta^2 n| \leq n(1-\delta) + n^2 ( p + p^2 \delta^2) = o(n^3)$. 
\end{proof}

Thus we have all the equivalent formulations of quasirandomness, for example: 

\begin{cor} \label{c:52}
For all $A, B \subseteq V(G)$, we have that $e(A, B) = \frac{1}{p}|A||B| + o(n^2)$. 
\end{cor}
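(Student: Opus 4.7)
The plan is to derive this edge-discrepancy statement (the standard \emph{DISC} form of quasirandomness, where I read the right-hand side of the corollary as $\tfrac{1}{p}|A||B|+o(n^2)$ to match the density computed in Claim \ref{c:count1}(2)) from the codegree estimate~(\ref{e:sum}) already proved in Theorem~\ref{t:psr}. The equivalence CODEG $\Leftrightarrow$ DISC is one of the Chung--Graham--Wilson equivalences, so one could simply invoke~\cite{c-g-w}; but a self-contained Cauchy--Schwarz argument is short enough to include and is what I would write.

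Given $A,B\subseteq V(G)$, I would define the local discrepancy
\[ f(w) \;=\; \sum_{v\in A}\bigl(\mathbf{1}_{v\sim w}-\delta\bigr), \]
so that $e(A,B)-\delta|A||B|=\sum_{w\in B}f(w)$. Cauchy--Schwarz gives
\[ \Bigl(\sum_{w\in B}f(w)\Bigr)^{2} \;\le\; |B|\sum_{w\in V}f(w)^{2}, \]
and expanding the square turns $\sum_{w\in V}f(w)^{2}$ into
\[ \sum_{v,v'\in A}\Bigl(|N(v)\cap N(v')|\,-\,\delta\deg(v)\,-\,\delta\deg(v')\,+\,\delta^{2}n\Bigr). \]
By Claim \ref{c:count1}(1) every vertex except the constant-zero class has degree exactly $p^{2k-1}$, and $\delta=\tfrac{1}{p}+\epsilon$ with $\epsilon<\tfrac{1}{n}$, so $\delta\deg(v)=\delta^{2}n+O(1)$ for all but one vertex. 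The sum thus reduces to $\sum_{v,v'\in A}\bigl(|N(v)\cap N(v')|-\delta^{2}n\bigr)$ plus an error of size $O(n^{2})$ absorbed from the near-regularity defect and from the one exceptional vertex contributing $O(n)$ pairs.

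Now I would apply (\ref{e:sum}): since
\[ \sum_{v,v'\in A}\bigl||N(v)\cap N(v')|-\delta^{2}n\bigr| \;\le\; \sum_{v,v'\in V}\bigl||N(v)\cap N(v')|-\delta^{2}n\bigr| \;=\; o(n^{3}), \]
we get $\sum_{w\in V}f(w)^{2}=o(n^{3})$, and then $|B|\le n$ yields $\bigl(\sum_{w\in B}f(w)\bigr)^{2}\le n\cdot o(n^{3})=o(n^{4})$, whence $e(A,B)-\delta|A||B|=o(n^{2})$, which is the claim.

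The main obstacle is really only bookkeeping: one must handle the single constant-zero vertex (which alone breaks exact regularity) and the small gap between $\delta$ and $\tfrac{1}{p}$. Because the exceptional vertex contributes only $O(n)$ pairs $(v,v')$ and $\epsilon<\tfrac{1}{n}$, both of these effects can be controlled crudely and absorbed into the $o(n^{2})$ error, so the argument reduces entirely to the codegree estimate already obtained in Theorem~\ref{t:psr}.
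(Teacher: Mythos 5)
Your proof is correct, and you correctly spotted that the coefficient $p$ in the statement must be $\tfrac{1}{p}$ to match the density $\delta=\tfrac{1}{p}+\epsilon$ computed in Claim \ref{c:count1}(2) (this is evidently a typo in the paper; since $e(A,B)\le|A||B|$, the stated form is impossible). The paper itself offers no written proof of Corollary \ref{c:52}: it simply remarks that, Theorem \ref{t:psr} having verified the codegree condition (property P7 in \cite{sudakov}), all the equivalent formulations of quasirandomness follow, in particular the edge-discrepancy property asserted here, with the equivalence cited as a black box from \cite{c-g-w}, \cite{sudakov}. You chose instead to open the box and derive DISC from CODEG by the standard Cauchy--Schwarz argument: defining $f(w)=\sum_{v\in A}(\mathbf{1}_{v\sim w}-\delta)$, bounding $\bigl(\sum_{w\in B}f(w)\bigr)^2\le|B|\sum_{w\in V}f(w)^2$, expanding the square into codegree and degree terms, and then invoking the already-proved estimate~(\ref{e:sum}). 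Your bookkeeping is also sound: the single exceptional vertex (the constant-zero class, of degree $p^{2k}$ rather than $p^{2k-1}$) contributes only $O(n)$ pairs and hence $O(n^2)=o(n^3)$ to $\sum_w f(w)^2$, and $\epsilon<\tfrac{1}{n}$ makes both $\delta\deg(v)-\delta^2 n=O(1)$ for the regular vertices and $\delta|A||B|-\tfrac{1}{p}|A||B|=o(n^2)$. Both routes are legitimate; yours has the merit of being self-contained and of making explicit exactly where near-regularity and the smallness of $\epsilon$ are used, at the cost of a short computation the paper elects to outsource to the quasirandomness literature.
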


Let us verify that the analogous result transfers from the quotient $\tilde{\Gamma}$ to $\Gamma$. Recall that in $\Gamma$, 
each point of $\tilde{\Gamma}$ blows up to a clique on $p$ vertices (grouping together the $p$ central classes in our picture to form the blow-up of $0$). 
Write $[v]$ for the set of vertices in the blow-up of $v \in \tilde{\Gamma}$. Then if $a \in [v], b \in [w]$ we have that 
$(a,b)$ is an edge in $\Gamma$ if and only if $(v,w)$ is an edge in $\tilde{\Gamma}$.  So the graph $\Gamma$ has 
$N = np = p^{2k+1}$ vertices and each vertex $x \in \Gamma$, $x \notin [0]$ has degree $p \cdot p^{2k-1} = p^{2k}$.
Recalling \ref{et-fact}, we may compute density via ordered pairs of edges in $\Gamma$ by 
$N(p^{2k}+p-1)/N^2 = (p^{2k}+p-1)/p^{2k+1} = \frac{1}{p} + \epsilon$, where $\epsilon = (p-1)/p^{2k+1} < \frac{p}{N}$ goes to $0$ as $k \rightarrow \infty$. 
The exponents in Claim \ref{c:count1}(3) go up by one in each case and so the parallel count in Theorem \ref{t:psr} goes through 
(with $N$ instead of $n$). 

\begin{concl} \label{c:57}
The sequence of graphs 
\[ \langle {\Gamma}(H_{2k+1}) : k \rightarrow \infty \rangle \] 
is quasirandom.  
\end{concl}

\begin{conv} 
For any group $G$ and $A \subseteq G$ a subset or subgroup, write $\Gamma(A)$ to mean the induced subgraph of $\Gamma(G)$ formed by restricting 
the vertex set to $A$. Likewise, for $A, B \subseteq G$, write $\Gamma(A \times B)$ for the corresponding bipartite graph, where bipartite in this case 
\emph{means} that we ignore the edges on $A$ and on $B$, not that we require them not to exist. 
\end{conv}

\begin{disc}
We may also conclude from this that the sequence of \emph{bipartite} graphs $\Gamma(H_{2m+1} \times H_{2m+1})$ as $m \rightarrow \infty$ 
is quasirandom.  
Since we had allowed self-loops in $\Gamma$ and $\tilde{\Gamma}$, 
the calculations are the same. 
\end{disc}

\br

\section{Rado-ness}

This section works directly with infinite extraspecial $p$-groups $H_\omega(p)$, for $p$ an odd prime. 
The main result shows that the commuting graph $\tilde{\Gamma}(H_\omega)$ contains 
an induced copy of the Rado graph in quite a strong way: the vertices of $\mcr$ (together with the center $Z(H_\omega)$) generate $H_\omega$.  
\possible{We include different proofs of the various lemmas (one due to the referee) with different advantages.}
At the end, we discuss what these results say for finite Heisenberg groups.

Observe that $H_{2n+1}$ appears as a subgroup of $H_{2m+1}$ for $m\geq n$, and also as a subgroup of $H_\omega$ 
(consider those $[x,y,z]$ in the larger group whose $x$ and $y$ have nonzero entries only in the 
first $n$ places; it's sufficient to show \emph{this} $H_{2n+1}$ appears, recalling the finite extraspecial $p$-groups of 
exponent $p$ are determined up to isomorphism by their size). So it follows from quasirandomness\footnote{\label{footnote-4}
This is easier than 
quasi-randomness and can be seen directly.  Consider $n=3$. Choose as a first vertex 
$(1,0,0), (0,0,0)$. Choose as the second vertex $(0,1,0), (a,0,0)$ where $a$ is $0$ or $1$ to control commuting with the first vertex. 
Choose as the third vertex $(0, 0, 1), (b,c,0)$ where $b, c$ are chosen to be $0$ or $1$ to control commuting with the second and third 
vertices respectively.}  
 that:\footnote{ For this section the 
sizes involved won't matter, however the previous footnote suggests $H_{2n+1}$ is universal for graphs of size $n$.} 

\begin{cor} \label{c:41} 
For every finite $k$ there is $n_*=n_*(k)$ such that every graph on $k$ vertices appears as an induced subgraph of 
$\tilde{\Gamma}(H_{2n+1})$ for every $n \geq n_*$. 
\end{cor}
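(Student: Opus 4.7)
The plan is to realize an arbitrary target graph $G$ on vertices $v_1,\ldots,v_k$ directly inside $\tilde{\Gamma}(H_{2k+1})$, which gives $n_* = k$. Identify vertices of $\tilde{\Gamma}(H_{2n+1})$ with nonzero pairs $(x,y) \in \mathbb{F}_p^n \oplus \mathbb{F}_p^n$, and recall from the introduction that $(x,y)$ is adjacent to $(x',y')$ iff the symplectic pairing $xy' - x'y$ vanishes in $\mathbb{F}_p$ (here $xy'$ is the dot product $\sum x_iy_i'$). Since the excerpt already notes that $H_{2k+1}$ embeds in $H_{2n+1}$ for $n \geq k$, it is enough to produce the copy in $\tilde{\Gamma}(H_{2k+1})$.

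First I would fix $x_i = e_i$, the $i$-th standard basis vector in $\mathbb{F}_p^k$. Then for distinct $i,j$,
\[ x_i \cdot y_j - x_j \cdot y_i = y_j(i) - y_i(j), \]
so the entries $y_i(j)$ with $i \neq j$ are $k(k-1)$ independent free parameters, and the pairing between the $i$-th and $j$-th vertex depends only on the two entries $y_i(j)$ and $y_j(i)$. I then choose these one pair at a time: for each unordered pair $\{i,j\}$, set $y_i(j) = 0$, and set $y_j(i) = 0$ when $v_i \sim v_j$ in $G$ and $y_j(i) = 1$ otherwise (the diagonal entries $y_i(i)$ may be set arbitrarily). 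With these choices, $(x_i,y_i)$ commutes with $(x_j,y_j)$ in $\tilde{\Gamma}(H_{2k+1})$ iff $v_i \sim v_j$. The $k$ pairs $(x_i,y_i)$ are pairwise distinct and nonzero because the $x_i$ are, so they span an induced subgraph isomorphic to $G$.

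The only step that requires any thought is the observation that fixing the first coordinates to have pairwise disjoint supports decouples all edge constraints, reducing the problem to independent choices of individual coordinates of the $y_i$; beyond that there is really no obstacle. Alternatively, one could deduce the corollary from Theorem \ref{t:psr}: quasirandomness of density $1/p$ combined with a standard induced counting lemma forces every graph $G$ on $k$ vertices to appear as an induced subgraph with density tending to $(1/p)^{e(G)}(1-1/p)^{\binom{k}{2}-e(G)} > 0$, hence to appear at all for $n$ large. But the direct construction gives the sharp bound $n_*(k) = k$ with no asymptotics needed.
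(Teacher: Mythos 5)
Correct, and essentially the same as the paper's own argument: your construction with $x_i = e_i$ and free $y$-coordinates serving as independent edge controls is exactly the direct construction the paper sketches in its footnote to this corollary (with the paper's following footnote drawing the same sharp conclusion $n_*(k)=k$), and the quasirandomness route you mention as an alternative is the one cited in the main text. One small notational caveat: phrasing the assignment in terms of unordered pairs $\{i,j\}$ leaves it ambiguous which of $y_i(j)$, $y_j(i)$ is zeroed and which is the control bit; fixing an order (say $i<j$) removes the ambiguity without changing anything.
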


\begin{concl} \label{c:42}
In $\tilde{\Gamma}(H_\omega)$, every finite graph appears as an induced subgraph.  $($It follows by choosing representatives 
from each of the conjugacy classes involved that the same is true in $\Gamma(H_\omega)$.$)$ 
\end{concl}

To motivate what follows: with a little more work (explained below),
 we can find the full Rado graph $\mcr$ inside $\tilde{\Gamma}(H_\omega)$ as an induced subgraph, 
but we might wonder how integral this subgraph is to the structure of the whole graph. 
 Since $\tilde{\Gamma}$ is defined in terms of conjugacy classes, a notion of a group ``built over a graph'' is introduced as a surrogate for 
 ``the vertices of $\mcr$ generate $H_\omega$.''  
  
\begin{defn}
Say that the extraspecial $p$-group $H$ is \emph{built over} the graph $R$ if there is $X \subseteq \Gamma(H)$ 
such that:
\begin{enumerate}
\item the set $X$ is a union of conjugacy classes 
\item $\tlg(X)$ and $R$ are isomorphic as graphs
\item $H$ is equal to the subgroup of $H$ generated by $X \cup Z(H)$. 
\end{enumerate}
\end{defn}


\begin{claim}  \label{c:42}
Suppose $H$ is an infinite extraspecial $p$-group and $X \subseteq H$ is infinite and \possible{for every $x \in X$ there 
is $y \in X$ which does not commute with $x$}. Let 
$G$ be the subgroup of $H$ generated by $X \cup Z(H)$.  Then $G$ is an extraspecial $p$-group also.  
Note that $|G| = |X|$. 
\end{claim}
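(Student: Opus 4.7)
The plan is to verify, in turn, the defining conditions of an extraspecial $p$-group for $G := \langle X \cup Z(H) \rangle$ viewed as a subgroup of $H$: that $G$ has exponent $p$, $G' = Z(G) = \Phi(G) \cong \mathbb{F}_p$, and finally $|G| = |X|$.

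I would dispatch the easy pieces first. $G \leq H$ inherits exponent $p$ from $H$. For the commutator subgroup, every commutator in $G$ is a commutator in $H$, so $G' \leq H' = Z(H) \cong \mathbb{F}_p$; since $X$ contains a non-commuting pair, $G$ is nonabelian, so $G' \neq \{\id\}$, and because $Z(H)$ is cyclic of prime order this forces $G' = Z(H)$. By construction $Z(H) \leq G$, and $Z(H) \leq Z(G)$ since $Z(H)$ centralizes all of $H \supseteq G$.

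The core step is to upgrade $Z(H) \leq Z(G)$ to equality. I would view $V := G/Z(H)$ as an $\mathbb{F}_p$-vector subspace of the elementary abelian $H/Z(H)$. The commutator bracket descends to a bilinear form $\omega \colon V \times V \to Z(H) \cong \mathbb{F}_p$, and its radical is precisely $Z(G)/Z(H)$. So the reduction is to show that $\omega$ restricted to $V$ is nondegenerate, i.e., that every $g \in G \setminus Z(H)$ fails to centralize some element of $G$. I expect this to be the hard part: one has to rule out the possibility that an element $g \notin Z(H)$ happens to commute with every element of $X$ (and hence with all of $G$). The strategy would be to exploit the infinite size of $X$ together with the structure of $X$ as a union of non-central conjugacy classes (each a coset of $Z(H)$ of size $p$), to produce the required witness $g'$.

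Once nondegeneracy is established, $Z(G) = Z(H) = G' \cong \mathbb{F}_p$. The Frattini subgroup then follows immediately: in any $p$-group of exponent $p$, $\Phi(G) = G' \cdot G^p = G'$, so all three invariants coincide. For the cardinality statement, every element of $G$ is a finite word in the generating set $X \cup Z(H)$; since $|Z(H)| = p$ is finite and $X$ is infinite, this gives $|G| \leq |X|^{<\omega} = |X|$, while the reverse inequality $|G| \geq |X|$ is immediate from $X \subseteq G$.
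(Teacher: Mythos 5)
Your preliminary steps are fine and agree with what the paper does when it checks Felgner's axioms: exponent $p$ passes to subgroups, $G' \leq H' = Z(H)$ together with non-abelianness and $|Z(H)|=p$ forces $G'=Z(H)$, and the cardinality count via finite words in $X \cup Z(H)$ is exactly the paper's closing remark. (For the Frattini condition, note the paper deliberately avoids $\Phi$ for infinite groups by verifying Felgner's axioms instead.) But your proposal stops at precisely the load-bearing step: showing $Z(G)=Z(H)$, equivalently that the commutator form $\omega$ on $V=G/Z(H)$ has trivial radical. You correctly isolate this as the crux, but you offer only a hoped-for strategy (``exploit the infinite size of $X$ \dots to produce the required witness''), so as written this is not a proof. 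Worse, that strategy cannot succeed from the stated hypotheses alone. In $H_\omega(p)$ take $X=\{[e_1,0,0],\,[0,e_1,0]\}\cup\{[e_i,0,0]: i\geq 2\}$. This $X$ is infinite and contains a non-commuting pair, yet every element of $G=\langle X\cup Z(H)\rangle$ has the form $[x,\lambda e_1,z]$, so each $[e_i,0,0]$ with $i\geq 2$ commutes with all of $G$ while not being central in $H$: the radical is infinite and $G$ is not extraspecial. Note also that your appeal to ``$X$ a union of non-central conjugacy classes'' is not among the hypotheses of the claim and would not help anyway, since those classes are just the cosets $aZ(H)$ and replacing $X$ by $XZ(H)$ does not change $G$.

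For comparison, the paper closes this step by a different route: since $G\supseteq G'=Z(H)=H'$, the subgroup $G$ is normal in $H$, and the paper then asserts that an element whose $G$-conjugacy class has size one must have $H$-conjugacy class of size one and hence lie in $Z(H)$. The example above shows that this inference is exactly where additional input about $X$ is required (the element $[e_2,0,0]$ is $G$-central while its $H$-class is the whole coset $[e_2,0,0]Z(H)$), so your instinct that this is ``the hard part'' is correct, and you should not expect to fill it by general arguments from infinitude of $X$ plus one non-commuting pair. What makes the conclusion hold in the paper's actual application (the corollary realizing the Rado graph) is the extension property of $\mcr$: given a nonzero combination $\sum_i c_i v_i$ of finitely many vertices with, say, $c_1\neq 0$, there is a further vertex commuting with $v_2,\dots,v_k$ but not with $v_1$, and pairing against it shows the combination is not in the radical. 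A correct write-up of your plan therefore needs either an added hypothesis of this kind (the images of $X$ span a subspace of $H/Z(H)$ on which the commutator form is nondegenerate) or a nondegeneracy argument tailored to the specific $X$ being used.
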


\possible{
\begin{rmk}
Why the hypothesis on $X$? 
In the language of $\S \ref{s:sym}$, 
if $V$ is an infinite nondegenerate symplectic space and we are looking for subspaces corresponding to subgroups which are also extraspecial, 
they should be nondegenerate. Note that if  
$X$ is an infinite set of elements of $V$ which is nondegenerate in the sense that for every $x_i \in X$ there is $x_j \in X$ 
which does not commute with it, then the subspace $W$ generated by $X$ is also an infinite nondegenerate symplectic space. 
\end{rmk}
}

\begin{proof}[Proof of Claim $\ref{c:42}$]
Let us check that it satisfies Felgner's axioms $T_p$ \cite[p. 423]{felgner}.  
By assumption, $G$ satisfies the axioms for group theory. 
$C_p = Z(H) \subseteq G$. 
Since $G$ is a subgroup of an extraspecial group and is not abelian, it must be normal. So $Z(H) \subseteq Z(G) = \{  a \in G : $ the conjugacy class of $a$ has size $1$ in $G$ $\}$. 
\possible{By our assumption on $X$, 
any other element of $G$ whose conjugacy class has size $1$ in $G$ must still have this property in $H$ and so 
be in the center of $H$. }
So indeed (ii) the center is a cyclic group of order $p$.  Axiom (iii), which says that the derived group is contained in the center, 
corresponds to a universal statement so automatically passes to substructures.\footnote{The axiom as written says 
$\forall x \forall y \forall z (z x^{-1}y^{-1}xy = x^{-1}y^{-1}x y z)$, which a priori hides quantifiers in the expressions ``$x^{-1}$'' and 
``$y^{-1}$'' since our language has just $\times$ and $1$. However, for any elements $a, b, c, d$ in $G$ such that 
$ab=ba=1$ and $cd=dc=1$, we have that $\vp[a,b,c,d] = \forall z (z bdac= bdac z)$ holds in $H$ thus in $G$; or just 
observe that $\forall x \forall y \forall v \forall w \forall z ( (x v = v x =1 \land y w = w y = 1) \implies 
(z w v x y = w v x y z))$ is truly universal, and note $G$ contains all necessary inverses. }
Axiom (iv), which says the derived group is not trivial, is satisfied 
since $X$ is assumed to contain two elements which do not commute. Axiom (v) says every element has order $p$, which (is universal and) 
remains true. 
Axiom (vi) says the factor group modulo the center is infinite, 
which is true because $X$ is infinite and the center has size $p$. 

For the last line, note that every element of $G$ can be written as a finite string in elements of $X \cup Z(H)$ and the operations times and 
inverse, and if $\kappa$ is infinite, then $\kappa^{<\omega}  = \kappa$, i.e. the set of finite sequences of elements of $\kappa$ 
has size $\kappa$. 
\end{proof}

\possible{We now look for infinite graphs $R$ in $\tilde{\Gamma}(H)$ in Lemma \ref{finding-r}.  
Note that in the case where $R$ and $H$ are both countable, which 
is really all we need for $\mcr$,  we may see this simply in several ways. (The reader who believes it 
may look ahead to \ref{c:induced}--\ref{t:rado}.)  

\begin{proof}[First proof: $\tilde{\Gamma}(H_\omega)$ contains any countable graph as an induced subgraph.] 
By Corollary \ref{c:41} and the compactness theorem of first-order logic, there is a countable extraspecial $p$-group $H \equiv H_\omega$ such that 
$\tilde{\Gamma}(H)$ contains a given countable graph $R$ as an induced subgraph. By $\aleph_0$-categoricity, $H \cong H_\omega$. 
\end{proof}

Thanks to the referee for suggesting the following proof.  
For the quoted result about symplectic spaces (whose proof uses countability) 
see for example \cite{tomkinson} Theorem 3.9. 

\begin{proof}[The referee's proof]
It is known that a countable space with a symplectic form $B$ has a basis 
$\{ e_1, e_2$, $\dots$, $f_1, f_2, \dots \}$ such that $B(e_i, f_j) = \delta_{ij}$, and $B(e_i, e_j) = B(f_i, f_j) = 0$.  
Let $v_1, v_2, \dots$ be the vertices of the Rado graph, or another given countable graph. 
Map $v_1$ to $e_1$. Now assuming $v_1, \dots, v_n$ have been 
mapped correctly, map $v_{n+1}$ to $e_{n+1} + \sum^n_{i=1} x_i f_i$, where $x_i = 0$ if $v_{n+1} \sim v_i$, and is $1$ otherwise. 
Now it is readily checked that the images of $v_i$ and $v_{n+1}$ have product $0$ with respect to $B$ if and only if 
$v_i$ and $v_{n+1}$ are adjacent.
\end{proof}
}

\possible{
\begin{disc}
\emph{
There is also a nice connection between this proof and footnote 4 (for $n = \omega$), 
explained by the fact (see e.g. \cite{tomkinson} Corollary 3.10) 
that when $p > 2$, a countably infinite extraspecial $p$-group of exponent $p$ is the direct product with amalgamated center of groups 
each isomorphic to $H_3(p)$.  In this sense, we can see $e_i$ as $[x, 0, 0]$ where $x$ has $1$ in the $i$-th place and zeroes elsewhere, and 
$f_i$ as $[0, y, 0]$ where $y$ has $1$ in the $i$-th place and zeroes elsewhere. 
} 
\end{disc}
}

\begin{lemma} \label{finding-r}
For any infinite graph $R$, there is an infinite extraspecial $p$-group $H$ [i.e., a model of $T_p$] whose $\tilde{\Gamma}(H)$ contains 
$R$ as an induced subgraph. 
\end{lemma}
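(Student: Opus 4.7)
The plan is to reduce the problem to linear algebra via the standard correspondence between symplectic spaces over $\mathbb{F}_p$ and extraspecial $p$-groups of exponent $p$, and then to build the required symplectic space explicitly from $R$.

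For the correspondence: given any non-degenerate alternating $\mathbb{F}_p$-space $(W, \omega)$ (with $p$ odd), I would define $H = H(W, \omega) := W \times \mathbb{F}_p$ with Heisenberg product $(u, a)(v, b) := (u + v,\, a + b + \tfrac{1}{2}\omega(u,v))$. A direct calculation gives $(u,a)^p = (0,0)$ and $[(u,a),(v,b)] = (0, \omega(u,v))$, so $Z(H) = \{0\} \times \mathbb{F}_p$ is cyclic of order $p$, the non-central conjugacy classes are the cosets $C_u := \{u\} \times \mathbb{F}_p$ for $u \in W \setminus \{0\}$, and $C_u \sim C_v$ in $\tilde{\Gamma}(H)$ iff $\omega(u,v) = 0$. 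Felgner's axioms $T_p$ then hold for $H$ (with $|W|$ infinite giving axiom (vi)). So the lemma reduces to finding a non-degenerate symplectic $\mathbb{F}_p$-space $(W, \omega)$ and distinct nonzero vectors $\{v_r : r \in V(R)\} \subseteq W$ with $\omega(v_r, v_s) = 0$ iff $r = s$ or $r \sim_R s$.

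To construct such $(W, \omega)$, I would fix a total order $<$ on $V(R)$ and take $W$ to be the $\mathbb{F}_p$-vector space with basis $\{v_r, w_r : r \in V(R)\}$. Define $\omega$ on basis vectors by $\omega(v_r, w_s) := \delta_{rs}$, $\omega(w_r, w_s) := 0$, and $\omega(v_r, v_s) := 0$ if $r = s$ or $r \sim_R s$, else $\omega(v_r, v_s) := 1$ when $r < s$; extend skew-symmetrically and bilinearly. The dual basis $\{w_r\}$ witnesses non-degeneracy: pairing $x$ against $w_s$ reads off the $v_s$-coefficient of $x$, and pairing against $v_s$ then reads off the $w_s$-coefficient. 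So the $v_r$ are distinct and nonzero with the required pairing pattern, and applying the reduction yields the desired infinite extraspecial $H$.

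The main design choice requiring thought is enforcing non-degeneracy of $\omega$ without collapsing the $v_r$. The naive attempt -- start with $U := \bigoplus_{r} \mathbb{F}_p v_r$ and quotient by the radical of the form defined just on $U$ -- fails: if $r \ne s$ are ``twin'' vertices of $R$ (adjacent to each other and to exactly the same other vertices), then $v_r - v_s$ lies in the radical and gets killed in the quotient. Enlarging $U$ by a dual basis, as above, sidesteps this. Everything else (associativity of the Heisenberg product, verification of the axioms of $T_p$, and $|H|$ being infinite) is routine bookkeeping.
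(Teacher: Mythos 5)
Your construction is correct, but it takes a genuinely different route from the paper's. The paper proves Lemma \ref{finding-r} by compactness: it enumerates the finite induced subgraphs of $R$, embeds each one into $\tilde{\Gamma}(H_\omega)$ (the finite-embedding statement already established earlier in Section 4, via quasirandomness or the direct footnote construction), chooses class representatives, and then passes to an ultrapower $(H_\omega)^I/\mathcal{D}$ over an ultrafilter refining the filter generated by the sets $I_\alpha$ of finite vertex sets containing $\alpha$; by \L o\'s' theorem the ultrapower again models $T_p$, and the coordinatewise elements $\mathbf{v}_\alpha$ lie in distinct conjugacy classes and realize $R$ as an induced subgraph of $\tilde{\Gamma}$. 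You instead build $H$ explicitly from a symplectic space: the $v_r$ carry the adjacency pattern, the dual vectors $w_r$ are adjoined exactly to kill the radical (your twin-vertex remark correctly identifies why the naive quotient fails), and the central extension $W\times\mathbb{F}_p$ with cocycle $\tfrac{1}{2}\omega$ (legitimate since $p$ is odd, the standing assumption in this section) has center $\{0\}\times\mathbb{F}_p$, exponent $p$, non-central classes the cosets $\{u\}\times\mathbb{F}_p$, and commuting governed by $\omega(u,v)=0$, so Felgner's axioms and the required induced copy of $R$ all check out; note that verifying the classes are full cosets and that the derived group is the whole center both use nondegeneracy of $\omega$, which your $w_r$ supply even when $R$ is a clique. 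What each approach buys: the paper's argument is soft and short given the machinery, reusing the finite case and model-theoretic preservation, at the cost of an ultrafilter and a non-explicit, typically very large $H$ (later cut back down via the generation claim when needed); yours is explicit and self-contained, gives $|H|=|V(R)|$ on the nose, treats finite and infinite $R$ uniformly (so it independently reproves the finite embedding results), and is essentially the ``alternate guise as symplectic spaces over finite fields'' that the paper itself gestures at when sketching the complementary Shelah--Stepr$\bar{a}$ns-style argument -- here carried out directly rather than by quoting completeness of $T_p$.
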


\begin{proof}
We follow the classical proof of the compactness theorem via ultraproducts. 
Enumerate the set of vertices of $R$ as $\{ v_\alpha : \alpha < \kappa \}$ and enumerate the finite subsets of $\kappa$ as 
$\langle u_i : i \in I = [\kappa]^{<\aleph_0} \rangle$.  
For each $\alpha < \kappa$ let $I_\alpha= \{ i \in I : \alpha \in u_i \}$. Then 
the set $\{ I_\alpha : \alpha < \kappa \}$ has the finite intersection property, so may be extended to an ultrafilter 
$\de$ on $I$. 

For each $i \in I$, let $R_i$ be the finite induced subgraph of $R$ with vertex set $\{ v_\alpha : \alpha \in u_i \}$. Choose an isomorphic 
copy of this graph in $\tilde{\Gamma}(H_\omega)$ and choose a representative of each conjugacy class involved as a $\tilde{\Gamma}$-vertex; this 
amounts to choosing a set $A_i \subseteq H_\omega$ of elements in distinct conjugacy classes and a bijection $\pi_i : \{ v_\alpha : \alpha \in u_i \} \rightarrow A_i$ so that 
for any $\alpha, \beta \in u_i$, there is an edge between $v_\alpha, v_\beta$ in $R$ if and only if $\pi_i(v_\alpha)$ and $\pi_i(v_\beta)$ commute 
[i.e. if and only if there is an edge between $\pi_i(v_\alpha)$ and $\pi_i(v_\beta)$ in $\Gamma$].  

Work in the ultrapower $H = (H_\omega)^I/\de$, which is also an extraspecial $p$-group by \L o\'s' theorem 
(which implies the theory is preserved under ultrapowers: see \cite{c-k} Corollary 4.10). For each $\alpha < \kappa$ and each 
$i \in I$, define $\mathbf{v}_\alpha(i)$ 
to be $\pi_i(v_\alpha)$ if $\alpha \in u_i$, and any element of $H_\omega$ otherwise. For each $\alpha < \kappa$, define $\mathbf{v}_\alpha = \langle \mathbf{v}_\alpha(i) : i \in I \rangle/\de \in H$. Then the induced subgraph of $\Gamma(H)$ with vertex set $\{ \mathbf{v}_\alpha : \alpha < \kappa \}$ is isomorphic to $R$ via the map $\mathbf{v}_\alpha \mapsto v_\alpha$, since for any two $\alpha, \beta < \kappa$ the set 
$\{ i : \alpha, \beta \in u_i \} \in \de$, so there is an edge between $\mathbf{v}_\alpha, \mathbf{v}_\beta$ in $\Gamma(H)$ 
if and only if there is an edge between $v_\alpha$ and $v_\beta$ in $R$.  
Moreover, in $H$, $\{ \mathbf{v}_\alpha : \alpha < \kappa \}$ is a set of distinct elements, indeed a set of 
elements in distinct conjugacy classes, so there is a corresponding copy of $R$ in $\tilde{\Gamma}(H)$ 
replacing each $\mathbf{v}_\alpha$ by its conjugacy class. 
\end{proof}

\begin{cor} \label{c:induced}
There is an induced copy of the Rado graph $\mcr$ in $\tilde{\Gamma}(H_\omega)$. 
\end{cor}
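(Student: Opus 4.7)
The plan is to combine Lemma~\ref{finding-r} with the $\aleph_0$-categoricity of $T_p$ discussed in \S 2.4. First, apply Lemma~\ref{finding-r} in the case $R = \mcr$: this produces an infinite extraspecial $p$-group $H$ together with a family $\{ \mathbf{v}_\alpha : \alpha < \omega \} \subseteq H$, lying in pairwise distinct non-central conjugacy classes, such that the induced subgraph of $\tilde{\Gamma}(H)$ on $\{ [\mathbf{v}_\alpha] : \alpha < \omega\}$ is isomorphic to $\mcr$. Since the construction there is via an ultraproduct over a countable index set, $H$ is typically of size $2^{\aleph_0}$, so it remains to descend to the countable case.

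For this descent, apply the Downward L\"owenheim-Skolem theorem: since $\{ \mathbf{v}_\alpha : \alpha < \omega \} \cup Z(H)$ is countable, there is a countable elementary substructure $H_0 \preceq H$ containing it. By elementarity $H_0 \models T_p$, and being countable it falls under Felgner's categoricity theorem, so $H_0 \cong H_\omega$.

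It then remains to verify that the induced subgraph of $\tilde{\Gamma}(H_0)$ on $\{ [\mathbf{v}_\alpha] : \alpha < \omega \}$ is still isomorphic to $\mcr$. Commuting ($xy = yx$) is quantifier-free, so the adjacency pattern among the $\mathbf{v}_\alpha$ is identical in $H$ and in $H_0$. Moreover, ``$a$ and $b$ are not conjugate'' and ``$a$ is non-central'' are $\forall$-statements, namely $\forall z\, (z^{-1} a z \neq b)$ and $\forall z\, (az \neq za)$; by elementarity both transfer from $H$ down to $H_0$, so the $\mathbf{v}_\alpha$ remain in distinct non-central conjugacy classes of $H_0$. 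Transporting along the isomorphism $H_0 \cong H_\omega$ yields the required induced copy of $\mcr$ in $\tilde{\Gamma}(H_\omega)$.

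The only real obstacle here is conceptual rather than technical: one must keep track of which graph-theoretic features (adjacency, distinctness of conjugacy classes, non-centrality) are preserved under passage to an elementary substructure, and in which direction. Once each relevant property is rewritten as a first-order formula of the appropriate syntactic shape, everything is a one-line consequence of elementarity.
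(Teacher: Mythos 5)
Your argument is correct, but the descent from the (typically size-continuum) ultrapower $H$ to a countable model is carried out differently than in the paper. You invoke the Downward L\"owenheim--Skolem theorem to get a countable elementary substructure $H_0 \preceq H$ containing the vertices and the center, note $H_0 \models T_p$ for free by elementarity, and then apply Felgner's $\aleph_0$-categoricity; the paper instead passes to the subgroup $G \leq H$ generated by $Z(H)$ together with the vertex elements, and uses its Claim that such a generated subgroup is again an extraspecial $p$-group (a hands-on verification of Felgner's axioms) before applying categoricity. Your route is cleaner on the preservation side -- elementarity transfers every first-order property with parameters, so adjacency, non-conjugacy and non-centrality all come along automatically -- whereas the paper's route requires checking the axioms by hand. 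What the paper's choice buys, however, is the stronger statement it extracts from the same proof: since $G$ is \emph{generated} by $Z(H)$ and the vertices, one immediately gets Theorem \ref{t:rado}, that $H_\omega$ can be \emph{built over} $\mcr$; your elementary substructure $H_0$ need not be generated by the copy of $\mcr$ plus the center, so your proof yields the corollary but not that refinement. One small slip worth fixing: ``$a$ is non-central'' is the existential statement $\exists z\,(az \neq za)$, not the universal $\forall z\,(az \neq za)$ (which no element satisfies, as $a$ commutes with itself); this does not damage your argument, since elementarity transfers formulas of any syntactic shape, but the sentence as written misclassifies the formula.
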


\begin{proof}
Let $H$ be the extraspecial $p$-group given by Lemma \ref{finding-r} in the case where $R$ is the countable Rado graph $\mcr$ 
\possible{(which clearly satisfies the nondegeneracy hypothesis)}. Let 
$G$ be the subgroup of $H$ generated by $Z(H)$ and the (conjugacy classes of) elements forming the vertices of the copy of $\mcr$.  Then 
$G$ is countable and by Claim \ref{c:42} it is an extraspecial $p$-group, so by $\aleph_0$-categoricity it is isomorphic to $H_\omega$. 
\end{proof}

The proofs just given show:  

\begin{theorem} \label{t:rado}
$H_\omega$ can be built over the Rado graph $\mcr$.
\end{theorem}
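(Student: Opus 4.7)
The plan is to combine Lemma \ref{finding-r}, Claim \ref{c:42}, and the $\aleph_0$-categoricity of $T_p$ in essentially the order the preceding corollary already suggests, and then verify the three clauses of the definition of ``built over.''

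First, I would apply Lemma \ref{finding-r} with $R = \mcr$ to obtain some (possibly huge) infinite extraspecial $p$-group $H$ together with a set $V = \{ \mathbf{v}_\alpha : \alpha < \omega \} \subseteq H$ of representatives of distinct non-central conjugacy classes whose induced image in $\tilde{\Gamma}(H)$ is isomorphic to $\mcr$. Let $X$ be the union of those conjugacy classes, i.e.\ $X = \bigcup_{\alpha < \omega} C(\mathbf{v}_\alpha)$, and let $G$ be the subgroup of $H$ generated by $X \cup Z(H)$. Since $\mcr$ has (many) pairs of non-adjacent vertices, $X$ contains two elements that do not commute, so Claim \ref{c:42} applies and $G$ is itself an extraspecial $p$-group. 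Moreover, each conjugacy class $C(\mathbf{v}_\alpha)$ is a coset of $Z(H) \cong \mathbb{F}_p$, hence finite, so $|G| = \aleph_0 \cdot p = \aleph_0$.

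Next I invoke Felgner's theorem that $T_p$ is $\aleph_0$-categorical to conclude $G \cong H_\omega$. So by transporting along this isomorphism, it suffices to exhibit an $X \subseteq H_\omega$ witnessing the three clauses of the definition of ``built over,'' and the $X$ and $Z(H)$ chosen above, viewed inside $G \cong H_\omega$, do the job: (1) $X$ is a union of conjugacy classes by construction; (2) one must check that the induced graph $\tilde{\Gamma}(X)$ computed inside $G$ agrees with what it was inside $H$ — this is immediate because commuting is an absolute binary relation and the conjugacy classes of $G$ meeting $X$ are precisely those $C(\mathbf{v}_\alpha)$ (any central element of $G$ is central in $H$ by normality, and distinct $\mathbf{v}_\alpha, \mathbf{v}_\beta$ cannot become conjugate in $G$ since they are not conjugate in $H$) — so $\tilde{\Gamma}(X) \cong \mcr$; (3) $G$ is generated by $X \cup Z(H)$ by definition.

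The only step that requires genuine care — and what I would flag as the main subtlety rather than the main obstacle — is clause (2): we must make sure that passing to the subgroup $G$ does not collapse or merge any of the conjugacy classes indexed by vertices of $\mcr$, and does not enlarge the set of central elements beyond $Z(H)$. Both of these are handled in the proof of Claim \ref{c:42}: $Z(G) = Z(H)$ because any $a \in G$ whose $G$-conjugacy class is a singleton has, by normality of $G$ in $H$, singleton $H$-conjugacy class, hence lies in $Z(H)$; and two non-central elements of $G$ are $G$-conjugate iff they differ by a central element, so the $G$-classes among the $\mathbf{v}_\alpha$ are exactly their $H$-classes. Once this is in hand, the theorem is proved.
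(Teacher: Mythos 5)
Your proof is correct and follows essentially the same route as the paper: apply Lemma \ref{finding-r} with $R = \mcr$, form $G = \langle X \cup Z(H) \rangle$, invoke Claim \ref{c:42} and $\aleph_0$-categoricity to get $G \cong H_\omega$ --- exactly the corollary immediately preceding the theorem, after which the paper simply remarks ``the proof just given shows'' the theorem. The one thing you do beyond the paper's terse derivation is to spell out the verification of the three clauses of ``built over'' (in particular that $G$-conjugacy classes and the center do not shift when passing from $H$ to $G$), which the paper leaves implicit by deferring to Claim \ref{c:42}.
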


In fact, the proof shows that $H_\omega$ can be built over any countable graph $G$ which is 
\possible{nondegenerate in the sense that no vertex has full degree in $G$}. 

\begin{disc}
\emph{Just for fun we briefly sketch a complementary proof of \ref{c:41} from the uncountable. 
\cite{sh-st} explicitly construct uncountable extraspecial $p$-groups (all of whose maximal abelian subgroups are small).  
By looking carefully at their construction it is possible to see directly 
how to construct arbitrarily long finite sequences of elements 
whose patterns of commuting and non-commuting 
can be freely chosen along the way.  
Since $T_p$ is complete and the statement that there exist $n$ elements no two of which are conjugate and which have 
a given pattern of commuting and non-commuting is first-order expressible (for any fixed finite $n$), 
it follows that $\tilde{\Gamma}(H_\omega)$ 
contains any finite graph as an induced subgraph.  It remains to derive \ref{c:41}.  
Fix a finite graph $G$. 
Let $V$ be a set of elements of $H_\omega$ which form the vertices of an induced copy of $G$.  
Let $X$ be the smallest subset of $H_\omega$ which is a union of conjugacy classes and contains $V$.  Choose $n$ minimal so that 
$H_{2n+1}$ contains $X$ (since $X$ is a finite set, it suffices to choose the minimal $m$ such that all the nonzero entries in 
$x,y$ of each $[x,y,z] \in X$  are among the first $m$ elements). Then for every $\ell \geq n$, $G$ appears as an induced subgraph of 
$\tilde{\Gamma}(H_{2\ell+1})$, and so (choosing representatives of the conjugacy classes) 
also of $\Gamma(H_{2\ell+1})$.  
}
\end{disc}

We may summarize sections three and four by saying that both the $H_{2n+1}(p)$'s and $H_\omega(p)$ 
may be reasonably (and in the finite case, quantitatively) understood as ``random'' objects.

\br
\br

\section{Towards a picture of $U_n$}



In this section we revisit a certain notoriously complicated object, $\UT(n,p)$, the group of $n\times n$ uni-upper-triangular matrices with entries 
in $\mathbb{F}_{p}$ (which we will soon abbreviate ``$U_n$''). 
This basic group arises as the Sylow $p$-subgroup of $\GL(n,p)$.   It is a ``universal $p$-group'' in the sense that every $p$-group is a subgroup of 
some $\UT(n,p)$. The center consists of all matrices in $\UT(n,p)$ which are zero except in the $(1,n)$-entry. The commutator subgroup equals the 
Frattini subgroup. These consist of matrices in $\UT(n,p)$ which are zero on the diagonal just above the main diagonal. 
All of these facts are proved in \cite{suzuki}.

There has been extensive study of the conjugacy classes and characters of $\UT(n,p)$. These have resisted explicit description and 
form a well known ``wild'' problem \cite{wild}. 
 See \cite{p-s}, \cite{isaacs} for reviews.  This difficulty gave rise to Carlos Andr\'e's 
`super-character theory'. This lumps together certain conjugacy classes into superclasses and sums certain irreducible characters forming 
super-characters. These are constant on superclasses and decompose the regular character. They have an elegant description involving 
set partitions. For surveys see \cite{aguiar}, \cite{d-i}.  We began this project hoping to understand what made this character theory so 
complicated. Asking whether or not it was reasonable to crudely equate `complicated' with `random' led us to the findings in sections 1-4 above. 
This section shows how the `randomness' of the Heisenberg group manifests itself in $\UT(n,p)$.

Recall that, in our notation, the subscript in $H_{2n+1}$ counts the maximal possible number of nonzero nondiagonal entries, whereas the 
$n+2$ in $\UT(n+2,p)$ refers to the dimension of the matrix. In what follows we will often abbreviate $\UT(n,p)$ as ``$U_n$''. 
Summarizing, in our notation, elements of  
$H_{2n+1}$ and $U_{n+2}$ are both uni-upper-triangular matrices of the same size.   

\begin{lemma} \label{l:normal}
\[ H_{2n+1} \mbox{ is a normal subgroup of } U_{n+2}. \]
\end{lemma}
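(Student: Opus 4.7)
The plan is to realize $H_{2n+1}$ as the kernel of a natural homomorphism $U_{n+2} \to \UT(n,p)$, which makes normality immediate. From the matrix description in the introduction, $H_{2n+1}$ sits inside $U_{n+2}$ as those matrices $g$ whose only possibly nonzero off-diagonal entries lie in the first row, the last column, or the $(1, n+2)$ corner; equivalently, the $n \times n$ middle block of $g$ (rows and columns indexed $2, \ldots, n+1$) is the identity.

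Accordingly, the main step is to define the \emph{middle block} map $\phi \colon U_{n+2} \to \UT(n, p)$ by $\phi(g) = (g_{ij})_{2 \leq i, j \leq n+1}$ and check it is a homomorphism. For $g, h \in U_{n+2}$ and $2 \leq i, j \leq n+1$, one has $(gh)_{ij} = \sum_{k=1}^{n+2} g_{ik} h_{kj}$. The $k = 1$ term drops out because $g_{i,1} = 0$ for $i \geq 2$ (upper triangularity of $g$), and the $k = n+2$ term drops out because $h_{n+2, j} = 0$ for $j \leq n+1$ (upper triangularity of $h$). So only $k \in \{2, \ldots, n+1\}$ contribute, which means the middle block of $gh$ equals the product of the middle blocks of $g$ and $h$. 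By the description above, $H_{2n+1} = \ker \phi$, so $H_{2n+1}$ is normal in $U_{n+2}$.

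The main obstacle is really just the index bookkeeping to confirm those two ``zero pattern'' collapses in the product formula; there is no deeper issue. As a sanity check, $\phi$ is surjective (lift any $M \in \UT(n,p)$ by placing it as the middle block of a matrix in $U_{n+2}$ with all other nondiagonal entries zero), and $|U_{n+2}| / |\UT(n,p)| = p^{(n+2)(n+1)/2 - n(n-1)/2} = p^{2n+1} = |H_{2n+1}|$. If one prefers a more conceptual phrasing, $U_{n+2}$ preserves the flag $0 \subset \langle e_1 \rangle \subset \langle e_1, \ldots, e_{n+1}\rangle \subset \mathbb{F}_p^{n+2}$, automatically acts trivially on the first and last successive quotients, and $H_{2n+1}$ is exactly the subgroup acting trivially on the middle quotient---hence the kernel of the induced representation, and manifestly normal.
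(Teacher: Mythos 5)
Your proof is correct and is essentially the paper's first proof: the ``middle block'' map $\phi$ is exactly the paper's homomorphism from $U_{n+2}$ onto $U_n$ obtained by erasing the top row, bottom row, first column and last column, with kernel $H_{2n+1}(p)$; you have merely written out the ``by inspection'' verification (and added order and flag-theoretic sanity checks).
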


\begin{proof}[First proof]
Define a map from $U_{n+2}$ to $U_n$ 
by simply erasing the top row, bottom row, first column and last column of a matrix in 
$U_n$. By inspection, this is a homomorphism onto $U_n$ 
with kernel $H_{2n+1}(p)$.
\end{proof}

\begin{proof}[Second proof, using additional information about superclasses]
First, $H_{2n+1}$ is a subgroup of $U_{n+2}$. So it would suffice to show that $H_{2n+1}$ is a union of superclasses (because then $H_{2n+1}$ is a union of 
conjugacy classes). But if $\mathcal{C}$ is a superclass which contains some element of $H_{2n+1}$,  
then all its elements must be elements of $H_{2n+1}$, because of the 
form of the canonical representative. 
\end{proof}

Iterating this mapping, going from $U_n$ to $U_{n-2}$ 
and so on gives a way of picturing $U_{n+2}$ 
as a tower of Heisenberg groups, each normal in the next.  Of course, $p$-groups have many such series; this one can be refined so that each 
subgroup has index $p$ in the one above. These series can be hard to describe and work with. One of our discoveries is that these 
nested Heisenberg representations are quite understandable. 

Nonetheless, the conjugacy classes of $H_{2n+1}$ in $U_{n+2}$ will often be bigger than they were in $H_{2n+1}$. 
To build an explicit example it is useful to use the fact that sometimes the neat superclasses described above are actually 
conjugacy classes in $U_n$. 
Carlos Andre \cite{andre} has a necessary and sufficient condition for this to happen 
(thanks to Nat Thiem for spelling this out for us). The following construction uses this. Suppose  
 $1 < k,\ell < n+2$. 
Let $C \subseteq H_{2n+1} \subseteq U_{n+2}$ be the set of all matrices $A$ such that (1) 
$a_{1,\ell} \neq 0$ and $a_{1,i} = 0$ for $1 < i < \ell$, and (2) $a_{k,n} \neq 0$ and 
$a_{j,n} = 0$ for $k < j < n$.  Then $C$ is a conjugacy class of $U_{n+2}$ if $k < \ell$, and 
a union of more than one conjugacy class otherwise.

\begin{obs} \label{o:36}
The image of $U_{n+2}$ under the quotient by $H_{2n+1}$ is $U_n$.   Thus $U_{n+2}$ is an extension of the normal subgroup 
$H_{2n+1}$ by the quotient $U_{n}$. In fact, this extension is a semi-direct product. The matrices in 
$U_{n+2}$ which are zero in the top row and last column form a subgroup isomorphic to $U_{n}$, and the intersection with the subgroup which is 
non-zero only in the top row and last column $($a copy of $H_{2n+1}$$)$ is the identity, so the extension splits. This allows picturing 
$U_{n}$ inside $U_{n+2}$ which will become important in Theorem $\ref{t:325}$ below. 
\end{obs}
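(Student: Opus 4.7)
The plan is to verify all five assertions by a direct block-matrix argument, exhibiting an explicit splitting. The first proof of Lemma \ref{l:normal} already supplies a surjective homomorphism $\pi \colon U_{n+2} \to U_n$, obtained by erasing the top row, bottom row, first column, and last column, with kernel precisely $H_{2n+1}$. This immediately gives the quotient identification and the fact that $U_{n+2}$ is an extension of $H_{2n+1}$ by $U_n$. What remains is to exhibit a complementary subgroup.

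I would write each $A \in U_{n+2}$ in block form
\[ A = \begin{pmatrix} 1 & a^T & z \\ 0 & A' & b \\ 0 & 0 & 1 \end{pmatrix}, \qquad a,b \in \mathbb{F}_p^n,\ z \in \mathbb{F}_p,\ A' \in U_n, \]
and take $K \subseteq U_{n+2}$ to be those matrices with $a = 0$, $b = 0$, $z = 0$, i.e., exactly the matrices that are zero in the top row and last column away from the diagonal. A single block multiplication,
\[ \begin{pmatrix} 1 & 0 & 0 \\ 0 & A' & 0 \\ 0 & 0 & 1 \end{pmatrix} \begin{pmatrix} 1 & 0 & 0 \\ 0 & B' & 0 \\ 0 & 0 & 1 \end{pmatrix} = \begin{pmatrix} 1 & 0 & 0 \\ 0 & A'B' & 0 \\ 0 & 0 & 1 \end{pmatrix}, \]
shows $K$ is a subgroup and that $A \mapsto A'$ is a group isomorphism $K \xrightarrow{\sim} U_n$; in particular $\pi$ restricted to $K$ is an isomorphism onto $U_n$, providing the desired section of $\pi$.

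To finish, I would check that $K \cap H_{2n+1} = \{I\}$: elements of $H_{2n+1}$ force $A' = I_n$, while elements of $K$ force $a = b = 0$ and $z = 0$, leaving only the identity. Then counting,
\[ |K| \cdot |H_{2n+1}| = p^{\binom{n}{2}} \cdot p^{2n+1} = p^{\binom{n+2}{2}} = |U_{n+2}|, \]
so $U_{n+2} = H_{2n+1} K$. Combined with normality of $H_{2n+1}$ (Lemma \ref{l:normal}), this yields $U_{n+2} \cong H_{2n+1} \rtimes K \cong H_{2n+1} \rtimes U_n$, completing the splitting. There is no substantial obstacle here: the entire argument is a bookkeeping check in block form, and the cleanest conclusion is to let the identity $\binom{n}{2} + 2n + 1 = \binom{n+2}{2}$ replace any hands-on surjectivity argument for $H_{2n+1} K = U_{n+2}$.
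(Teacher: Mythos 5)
Your proposal is correct and follows essentially the same route as the paper: Observation \ref{o:36} is proved there exactly by pointing to the quotient map from the first proof of Lemma \ref{l:normal}, exhibiting the subgroup of matrices vanishing in the top row and last column as a copy of $U_n$, and noting its trivial intersection with $H_{2n+1}$. Your order count $p^{\binom{n}{2}}\cdot p^{2n+1}=p^{\binom{n+2}{2}}$ merely makes explicit the surjectivity of the section that the paper leaves implicit, so it is a fine (and slightly more careful) writing-up of the same argument.
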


\begin{cor} \label{n:h} 
Each $U_{n+2}$ contains a normal subgroup of index $p^{n(n-1)/2}$ whose $\Gamma$ is a quasirandom graph with density $\approx \frac{1}{p}$ 
$($as $n \rightarrow \infty$$)$. 
\end{cor}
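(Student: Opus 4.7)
The plan is to use $H_{2n+1}$ itself as the required normal subgroup; everything needed is already in place, so the proof amounts to assembling the earlier results and doing one index computation.

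First I would invoke Lemma \ref{l:normal} to assert that $H_{2n+1}$ sits as a normal subgroup of $U_{n+2}$. Then I would compute its index. Since $U_m$ has $\binom{m}{2}$ strictly upper-triangular entries, $|U_{n+2}| = p^{(n+2)(n+1)/2}$, and we already know $|H_{2n+1}| = p^{2n+1}$. A one-line arithmetic check gives
\[ \frac{(n+2)(n+1)}{2} - (2n+1) = \frac{n^2 - n}{2} = \frac{n(n-1)}{2}, \]
so the index is $p^{n(n-1)/2}$. (As a sanity check, this matches Observation \ref{o:36}, which identifies the quotient with $U_n$, and $|U_n| = p^{n(n-1)/2}$.)

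For the final clause, I would simply quote Conclusion \ref{c:57}, which already establishes that the sequence $\langle \Gamma(H_{2k+1}) : k \to \infty \rangle$ is quasirandom, together with the density computation done just before that conclusion, which gives density $\frac{1}{p} + \epsilon$ with $\epsilon \to 0$ as $n \to \infty$.

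Since every component of the statement is either a direct citation or a single arithmetic identity, there is no real obstacle; the only thing that requires care is making sure the index calculation lines up with both readings (via $|U_{n+2}|/|H_{2n+1}|$ and via the quotient $U_{n+2}/H_{2n+1} \cong U_n$), which it does.
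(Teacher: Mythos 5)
Your proposal is correct and is essentially the paper's intended argument: the subgroup is $H_{2n+1}$, normal by Lemma \ref{l:normal}, with index $|U_{n+2}|/|H_{2n+1}| = p^{n(n-1)/2}$ (equivalently $|U_n|$, matching Observation \ref{o:36}), and the quasirandomness and density $\approx \frac{1}{p}$ come directly from Conclusion \ref{c:57} and the computation preceding it. Nothing further is needed.
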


In \ref{n:h} 
the size of the subgroup is fairly small, and a priori it 
need not say much about the global structure of $\Gamma(U_{n+2})$, but 
again, note $U_n$ is the quotient of $U_{n+2}$ by precisely this subgroup.    

The following introduces notation for this iterated decomposition that is 
used throughout the rest of this section. For simplicity in these iterated constructions, we will generally assume $n$ is odd. 
Definition \ref{d:maps} is illustrated in Figures 2 and 3. 

\begin{defn} \label{d:maps}
For $X \in U_{n+2}$, 
\begin{enumerate}
\item[(a)] let 
$u(X) \in U_n$ be the image of $X$ under the quotient of $U_{n+2}$ by $H_{2n+1}$, i.e., 
the unique $A \in U_n$ such that $a_{k,\ell} = x_{k-1, \ell-1}$ for $2 \leq k < \ell \leq n+1$;  
\item[(b)] let $h(X) \in H_{2n+1}$ be the unique $B \in H_{2n+1}$ such that $b_{1,k} = x_{1,k}$ for all $1 \leq k \leq n+2$ and 
$b_{\ell, n+2} = x_{\ell, n+2}$ for all $1 \leq \ell \leq n+2$. 
\end{enumerate} 
\end{defn} 

\noindent 
Observe that 
\[ X \mapsto ( u(X), h(X) ) \in U_{n} \times H_{2n+1} \]
defines a bijection from $U_{n+2}$ to $U_n \times H_{2n+1}$.  Informally, $u(X)$ is the ``core'' of $X$, the matrix resulting by 
removing the outermost rows and columns, and 
$h(X)$ is its ``shell.'' The first operation, $u$, can be iterated by defining $u^0(X) = X$ and $u^1(X) = u(X)$ and 
$u^2(X) = u(u(X)) \in U_{n-2}$, and so on down to $[1] \in U_1$. 

\br
\begin{center}

\includegraphics[height=29mm]{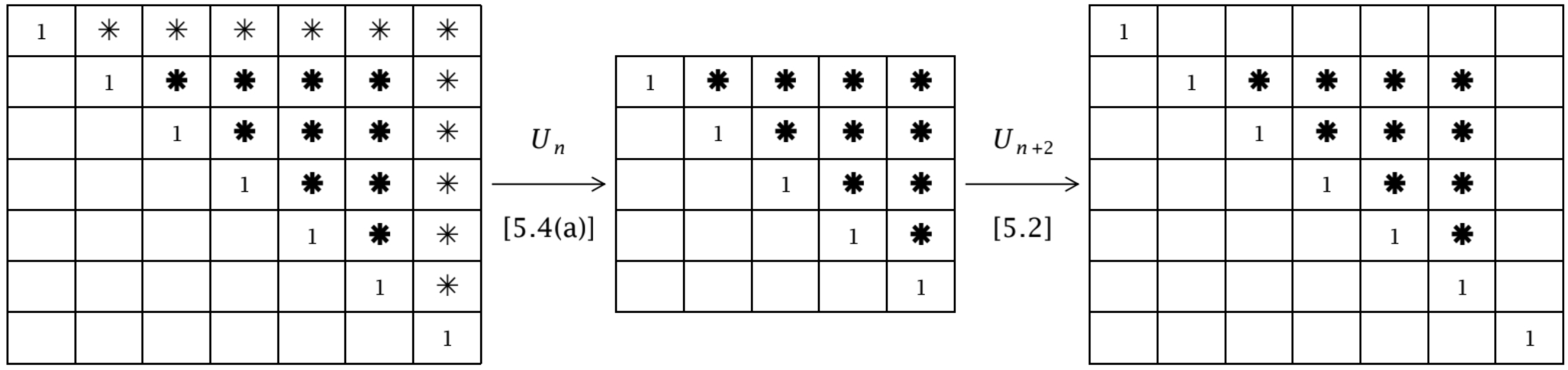}

\textcolor{gray}{\begin{small}\textsc{Figure 2}. $u(X)$ in $U_n$, see $\ref{d:maps}(a)$, and in $U_{n+2}$, see $\ref{o:36}$, for $n=5$.\end{small}}

\end{center}

\br

\begin{center}
\includegraphics[height=29mm]{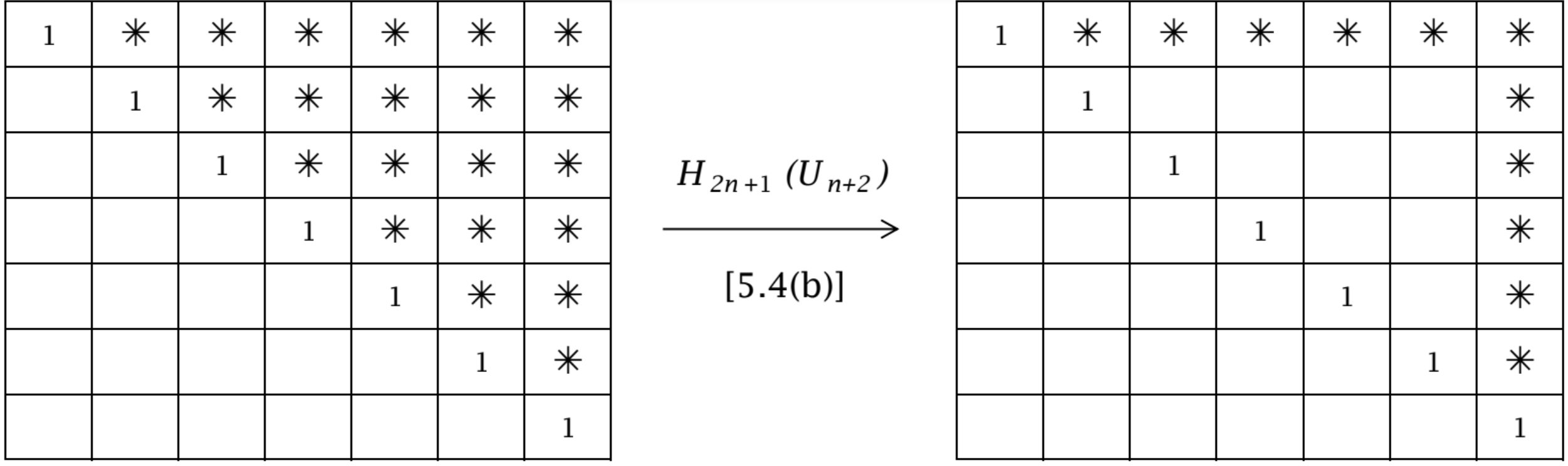}

\textcolor{gray}{\begin{small}\textsc{Figure 3}. 
$h(X)$ in $H_{2n+1}$ (or $U_{n+2}$), see $\ref{d:maps}(b)$, for $n=5$.\end{small}} 
\end{center}

\br
\begin{defn} \label{d:partial-order}
Define a partial order on $\uu_{n+2} = \bigcup \{ U_m : m\leq n+2, m \mbox{ odd} \} $ by: 
\[ A \tlf X \]
if $A = u^\ell(X)$ for some $0 \leq \ell \leq \frac{n+1}{2}$. 
\end{defn}

In the next observation, and in what follows, we think of trees rooted at the bottom and growing upwards.  We will 
 give several observations and definitions before pausing to summarize the picture. 

\begin{obs} Under this partial order
$(\uu_{n+2}, \tlf)$ 
is a tree with root $[1]$ and with uniform branching at each level: for $m$ odd and $m \leq n+2$, for each $A \in U_m$, 
\[ | \{ X : u(X) = A \} | = | \{ X : A \tlf X \} \cap U_{m+2} | = |H_{2m+1}| = p^{2m+1}. \]
\end{obs}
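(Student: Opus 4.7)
The plan is to verify the tree structure and the branching count separately, leveraging the bijection $X \mapsto (u(X), h(X))$ established just before Definition \ref{d:partial-order}.

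First I would confirm that $\tlf$ is in fact a partial order on $\uu_{n+2}$: reflexivity is immediate from $u^0(X) = X$, transitivity from the composition identity $u^{\ell+\ell'} = u^\ell \circ u^{\ell'}$, and antisymmetry because $u$ strictly decreases the dimension whenever $\ell \geq 1$, so one cannot have both $A = u^\ell(X)$ and $X = u^{\ell'}(A)$ with $\ell, \ell' \geq 1$. Membership in $\uu_{n+2}$ is closed under $u$ since $m+2$ odd forces $m$ odd.

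Next I would argue the tree structure. For any $X \in U_m \subseteq \uu_{n+2}$ with $m$ odd, the chain
\[ X \;\triangleright\; u(X) \;\triangleright\; u^2(X) \;\triangleright\; \cdots \;\triangleright\; u^{(m-1)/2}(X) = [1] \]
terminates in the unique element of $U_1$, so $[1]$ is a root and every element has a unique decreasing path to it. To see this is a tree (i.e., that the set $\{ A \in \uu_{n+2} : A \tlf X \}$ is linearly ordered for each $X$), note that any $A$ below $X$ is $u^\ell(X)$ for a \emph{unique} $\ell$, determined by the dimension of $A$; so the down-set of $X$ is precisely the finite chain written above.

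Finally, the branching count at $A \in U_m$ reduces directly to the earlier bijection. The predecessors of $A$ in the tree are exactly $\{ X \in U_{m+2} : u(X) = A \}$, and by the bijection $U_{m+2} \longleftrightarrow U_m \times H_{2m+1}$ sending $X \mapsto (u(X), h(X))$, fixing the first coordinate to $A$ leaves $h(X)$ free to range over all of $H_{2m+1}$. Hence
\[ |\{ X \in U_{m+2} : u(X) = A \}| \;=\; |H_{2m+1}| \;=\; p^{2m+1}, \]
which also equals $|\{X : A \tlf X\} \cap U_{m+2}|$ since $A \tlf X$ together with $X \in U_{m+2}$ forces $\ell = 1$, i.e. $u(X) = A$. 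No step here is a real obstacle; the main point is simply to recognize that the bijection already does all the counting work, and that uniqueness of the decreasing chain gives the tree property for free.
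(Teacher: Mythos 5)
Your proof is correct, and it follows exactly the route the paper intends: the paper states this as an unproved observation immediately after noting the bijection $X \mapsto (u(X), h(X))$ from $U_{m+2}$ to $U_m \times H_{2m+1}$, and your argument simply spells out that this bijection gives the branching count while uniqueness of the descending chain $X \triangleright u(X) \triangleright \cdots \triangleright [1]$ gives the tree structure. Nothing is missing.
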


\begin{obs} \label{o:360} A necessary condition for $X, Y$ to commute in $U_{n+2}$ is that: 
\\ $($$u(X), u(Y)$ commute in $U_n$$) \land ($$h(X), h(Y)$ commute in $H_{2n+1}$$)$.   Thus:  
\begin{enumerate}
\item[(a)] if $X, Y$ commute in $U_{n+2}$, then $u^\ell(X)$, $u^\ell(Y)$ commute in $U_{n-2\ell}$. 

\item[(b)] for any $A, B \in U_n$, the bipartite commuting graph 
\[ \Gamma\left( \{ X \in U_{n+2}  : u(X) = A \} \times \{ Y \in U_{n+2} : u(Y) = B \} \right) \]
is a subgraph of $\Gamma(H_{2n+1} \times H_{2n+1})$ $($a priori, possibly empty or not induced$)$.
\end{enumerate}
\end{obs}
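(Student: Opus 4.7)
The plan is to establish the two ``necessary conditions'' separately, and then derive (a) and (b) as essentially bookkeeping.

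The $u$-condition requires no real calculation: the first proof of Lemma \ref{l:normal} already realizes $u : U_{n+2} \to U_n$ as a surjective group homomorphism (obtained by erasing the outer row/column shell). Hence $u(XY) = u(X)u(Y)$ in $U_n$, so $XY = YX$ immediately forces $u(X)u(Y) = u(Y)u(X)$.

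The $h$-condition is not formal in the same way, since $X \mapsto (h(X), u(X))$ is only a \emph{set} bijection $U_{n+2} \to H_{2n+1} \times U_n$ rather than a group map. I would prove it by extracting a single scalar consequence of $XY = YX$, namely the entry at position $(1, n+2)$. A direct expansion gives
\[ (XY)_{1,n+2} \;=\; Y_{1,n+2} \;+\; \sum_{k=2}^{n+1} X_{1,k}\, Y_{k, n+2} \;+\; X_{1,n+2}, \]
together with the analogous formula in which $X$ and $Y$ are swapped. The crucial observation is that only the top-row entries of $X$ and the last-column entries of $Y$ -- precisely the data of $h(X)$ and $h(Y)$ -- enter this sum; the inner blocks $u(X), u(Y)$ contribute nothing at position $(1, n+2)$. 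On the other hand, the \emph{same} formula computes $(h(X)\, h(Y))_{1, n+2}$ when $h(X), h(Y)$ are multiplied inside $H_{2n+1} \subseteq U_{n+2}$, because their inner blocks are the identity and so contribute no intermediate terms. Thus the $(1, n+2)$-component of $XY = YX$ is verbatim the equation $(h(X) h(Y))_{1, n+2} = (h(Y) h(X))_{1, n+2}$; and because $[H_{2n+1}, H_{2n+1}] = Z(H_{2n+1})$ is the one-dimensional center concentrated at position $(1, n+2)$, this single identity is equivalent to $h(X)$ and $h(Y)$ commuting in $H_{2n+1}$.

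Once both necessary conditions are in hand, (a) follows by iteration of the $u$-condition, using that each power $u^\ell$ is again a group homomorphism. For (b), fix $A, B \in U_n$; the restriction of $h$ to the fiber $\{X \in U_{n+2} : u(X) = A\}$ is a bijection onto $H_{2n+1}$, and likewise for $B$. By the $h$-condition, every edge of the fiberwise bipartite commuting graph corresponds, under these bijections, to an edge of $\Gamma(H_{2n+1} \times H_{2n+1})$, exhibiting the former as a subgraph of the latter as required. The only step with genuine content is the identification of $(XY)_{1, n+2}$ with $(h(X) h(Y))_{1, n+2}$; this is where the Heisenberg structure inside $U_{n+2}$ becomes visible despite the fact that the set-level factorization $X \leftrightarrow (h(X), u(X))$ is not itself a group-theoretic semidirect product decomposition.
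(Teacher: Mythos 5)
Your proposal is correct and follows essentially the same route the paper takes: the paper treats the $u$-part as immediate from the quotient homomorphism of Lemma \ref{l:normal}, and the $h$-part is exactly its subsequent entry-by-entry analysis, where the $(1,n+2)$-entry of $XY=YX$ is the ``Heisenberg equation'' $E_{1,n+2}$, equivalent to $h(X)$ and $h(Y)$ commuting since their commutator lies in the one-dimensional center at that corner. Your derivations of (a) by iterating the homomorphism and of (b) via the fiberwise bijections $X \mapsto h(X)$ match the intended argument.
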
 

\noindent The next definition asks ``how many descendants of $B$ (in the sense of the partial order $\tlf$) 
commute with $X$.''  Note that while $X \in U_{n+2}$, here $B$ belongs to $\uu_{n+2}$ from \ref{d:partial-order} above, so a priori may be from any 
$U_m$ with $1 \leq m \leq n+2$, $m$ odd. Recall also that for simplicity $n$, hence $n+2$, is odd.

\begin{defn}
Given $X \in U_{n+2}$ and $B \in \uu_{n+2}$, define ``the degree of $X$ over $B$'':
\[ \deg(X, B) = | \{ Y \in U_{n+2} :  B = u^\ell(Y) \mbox{ for some $\ell$, and $Y$ commutes with $X$} \} |. \] 
\end{defn}

\begin{expl}
If $X, B \in U_{n+2}$, then $\deg(X, B) = 1$ if and only if 
$X$ and $B$ commute, otherwise it is zero. 
If $X \in U_{n+2}$ and $B \in U_{n-\ell k}$, then unless $B$ commutes with $u^\ell(X)$, $\deg(X, B) = 0$. 
If $B = [1] \in U_1$, then $\deg(X,B)$ is simply the degree of $X$ in the 
graph $\Gamma(U_{n+2})$ $($or $\Gamma(U_{n+2} \times U_{n+2})$$)$.  
\end{expl}

\begin{disc}
\emph{We pause to reflect on the picture this sequence of definitions gives 
of the structure of $\Gamma(U_{n+2})$.  (Since in $U$ the conjugacy classes are no longer 
of uniform size, it is at least initially more convenient to work with $\Gamma$ and not $\tilde{\Gamma}$.) 
The picture appears by induction. 
Since $U_3 = H_3$, $\Gamma(U_3)$ is described by the previous sections. Let $n$ be odd and greater than or equal to 3.  
Reversing the map from the proof of Lemma \ref{l:normal} amounts to blowing up each vertex $A$ of $\Gamma(U_n)$ to a set
$\{ X \in U_{n+2} : u(X) = A \}$ of size 
$p^{2n+1} = |H_{2n+1}|$.  To form $\Gamma(U_{n+2})$ we need to see how to put edges among these vertices. 
If $A, B \in \Gamma(U_n)$ are not connected by an edge, then 
in $\Gamma(U_{n+2})$, the bipartite commuting graph between $\{ X \in U_{n+2} : u(X) = A \}$ and $\{Y \in U_{n+2} : u(Y) = B \}$, call it 
$G(A,B)$ for this discussion, 
is empty. This is the content of saying: that $u(X), u(Y)$ commute is necessary for $X$ and $Y$ to commute. 
If $A, B$ are connected by an edge, then $G(A, B)$ is a subgraph 
of $\Gamma(H_{2n+1} \times H_{2n+1})$ under the identification $(X, Y) \mapsto (h(X), h(Y))$.  ``Subgraph'' is the content of 
saying: that $h(X), h(Y)$ commute is necessary, but not always sufficient, for $X$ and $Y$ to commute. 
An example where $G(A,B)$ is \emph{exactly} $\Gamma(H_{2n+1} \times H_{2n+1})$ is when $A = B = \id_{U_n}$; 
this is \ref{n:h} above.   A different situation occurs when $A = B$ has nonzero entries all along the diagonal 
just above the main diagonal and zeros otherwise. 
As can be calculated (e.g. using the equations below) 
the vertices of this $G(A, B)$ have degree $\leq p^3$ (independent of $n$). 
Meanwhile, a cumulative effect of this inductive ``sparsification'' is reflected in the density of $\Gamma(U_n)$ 
decreasing rapidly as $n$ grows, see below.  The more local irregularities in the rates of decrease suggested by this 
discussion seem worthy of study.} 
\end{disc}

We now work out a simple sufficient condition for $G(A,B)$ to contain a full $\Gamma(H_{2m+1} \times H_{2m+1})$ as an  \emph{induced} subgraph  
for some $m \leq n$ related naturally to $A$, $B$.  Of course, the condition assumes $A, B$ commute.  
Similar results can be obtained considering $k$-partite graphs above $A_1, \dots, A_k$ (provided they form a $k$-clique in $\Gamma(U_n)$). 
The reader may wish to look ahead to Discussion \ref{d:summary} or to the figures in the text.

Whether $X, Y \in U_{n+2}$ commute may be studied via the following elementary picture. 
Let $E_{i,j}$ be the equation asserting the $(i,j)$-entries of $XY$ and $YX$ are equal: 
$\langle$row $i$ of $X$$\rangle \cdot \langle$column $j$ of $Y \rangle = \langle$row $i$ of $Y$$\rangle \cdot \langle$\possible{column $j$ of $X$} $\rangle$. 
Consider $A = u(X)$,  
 its entries indexed from $(2,2)$ to $(n+1, n+1)$, and likewise for $B = u(Y)$.  
 Towards understanding $\Gamma(\{ X : u(X) = A \} \times \{ Y : u(Y) = B \})$, 
it is convenient to think of $A$, $B$ as fixed, so all the ``core'' entries of $X$ and $Y$ are determined, whereas the 
``shell'' entries $x_{1,2}, \dots, x_{1,n+2}, x_{2,n+2}, \dots, x_{n+1, n+2}$ of $X$ and 
$y_{1,2}, \dots, y_{1,n+2}, y_{2,n+2}, \dots, y_{n+1, n+2}$ of $Y$ are for now free variables. 
Then these equations $E_{i,j}$ are of three kinds:
\begin{enumerate}
\item[(a)] $E_{1,3}, \dots, E_{1,n+1}$ are $(n-1)$ equations in the $2(n-1)$ variables $\{ x_{1,2}, \dots, x_{1,n} \}$, $\{ y_{1,2}, \dots, y_{1,n} \}$. \footnote{for visualization, here are a few:   $[{E_{1,3}} :  ~x_{1,2} \cdot b_{2,3} = \cdots]$, 
$[ E_{1,4}:   ~x_{1,2} \cdot b_{2,4} + x_{1,3} \cdot b_{3,4} = \cdots]$,
$[ E_{1,5} :  ~x_{1,2} \cdot b_{2,5} + x_{1,3} \cdot b_{3,5} + x_{1,4} \cdot b_{4,5} = \cdots]$, 
$\dots$, $[E_{1,n+1}:   ~x_{1,2} \cdot b_{2,n+1} + x_{1,3} \cdot b_{3,n+1} 
+ \cdots + x_{1,n} \cdot b_{n,n+1} = \cdots]$ 
where ``$= \cdots$'' repeats the expression on the left replacing $x$'s by $y$'s of the same index, and 
$b$'s by $a$'s of the same index (some terms which cancel were omitted).} We omit $E_{1,2}$ because it is always satisfied. 
\item[(b)] $E_{2,n+2}, \dots, E_{n,n+2}$ are $(n-1)$ equations in the $2(n-1)$ variables [notice this set has empty intersection with the variables from (a)] 
$\{ x_{3, n+2}, \dots, x_{n+1, n+2} \}$, $\{ y_{3, n+2}, \dots, y_{n+1, n+2} \}$.  Likewise, omit $E_{n+1,n+2}$ which is always satisfied. 
\item[(c)] the ``Heisenberg'' equation $E_{1,n+2}$ asserts the top right corners are equal: 
\[ x_{1,2} y_{2,n+2} + \cdots + x_{1,n+1} y_{n+1, n+2} =  y_{1,2} x_{2,n+2} + \cdots + y_{1,n+1} x_{n+1, n+2}. \]
\end{enumerate}
Define $\sigma_A \subseteq \{ 3, \dots n+1 \}$ to be the set of columns with at least one nonzero $A$-entry above the diagonal,  
 $\tau_A \subseteq \{ 2, \dots, n \}$ the set of rows with at least one nonzero $A$-entry above the diagonal, and likewise for 
 $\sigma_B, \tau_B$.\footnote{Although the sets $\sigma_A, \tau_A$ depend only on $A$, they are computed by 
 considering the indices entries of $A$ would have sitting inside a matrix of $U_{n+2}$.} 

If $i \in \{ 3, \dots, n+1 \} \setminus \sigma_B$ then the left-hand side of 
$E_{1,i}$ is zero, and also $y_{i,n+2}$ has zero coefficients [so effectively does not appear] in (b).
If $i \in \{ 3, \dots, n+1 \} \setminus \sigma_A$ then the right-hand side of 
$E_{1,i}$ is zero, and $x_{i,n+2}$ has zero coefficients in (b). 
Likewise, if  $j \in \{ 2, \dots, n \} \setminus \tau_B$ then the left-hand side of $E_{j,n+2}$ 
is zero and $y_{1,j}$ has zero coefficients in (a), 
and if $j \in \{ 2, \dots, n \} \setminus \tau_A$ then 
the right-hand side of $E_{j,n+2}$ is zero and $x_{1,j}$ has zero coefficients in (a). 

It follows that given tuples (of elements of $\mathbb{F}_p$)  
$\langle \mx_{1,i} : i \in \tau_B \rangle ^\smallfrown \langle \mx_{j,n+2} : j \in \sigma_B \rangle$ and 
$\langle \my_{1,i} : i \in \tau_A \rangle ^\smallfrown \langle \my_{j,n+2} : j \in \sigma_A \rangle$ which together solve (a) and (b), if we 
subsequently restrict to $\mcx = \{ X : u(X) = A $, $x_{1,i} = \mx_{1,i}$ for $i \in \tau_B$, $x_{j,n+2} = \mx_{j,n+2}$ for $j \in \sigma_B \}$ and
$\mcy = \{ Y : u(Y) = B $, $y_{1,i} = \my_{1,i}$ for $i \in \tau_A$, $y_{j,n+2} = \my_{j,n+2}$ for $j \in \sigma_A \}$ 
then the commuting between $\mcx$ and $\mcy$ is controlled only by the Heisenberg equation, and so corresponds to 
an induced subgraph of $\Gamma(H_{2n+1} \times H_{2n+1})$.

\begin{center}
\includegraphics[width=40mm]{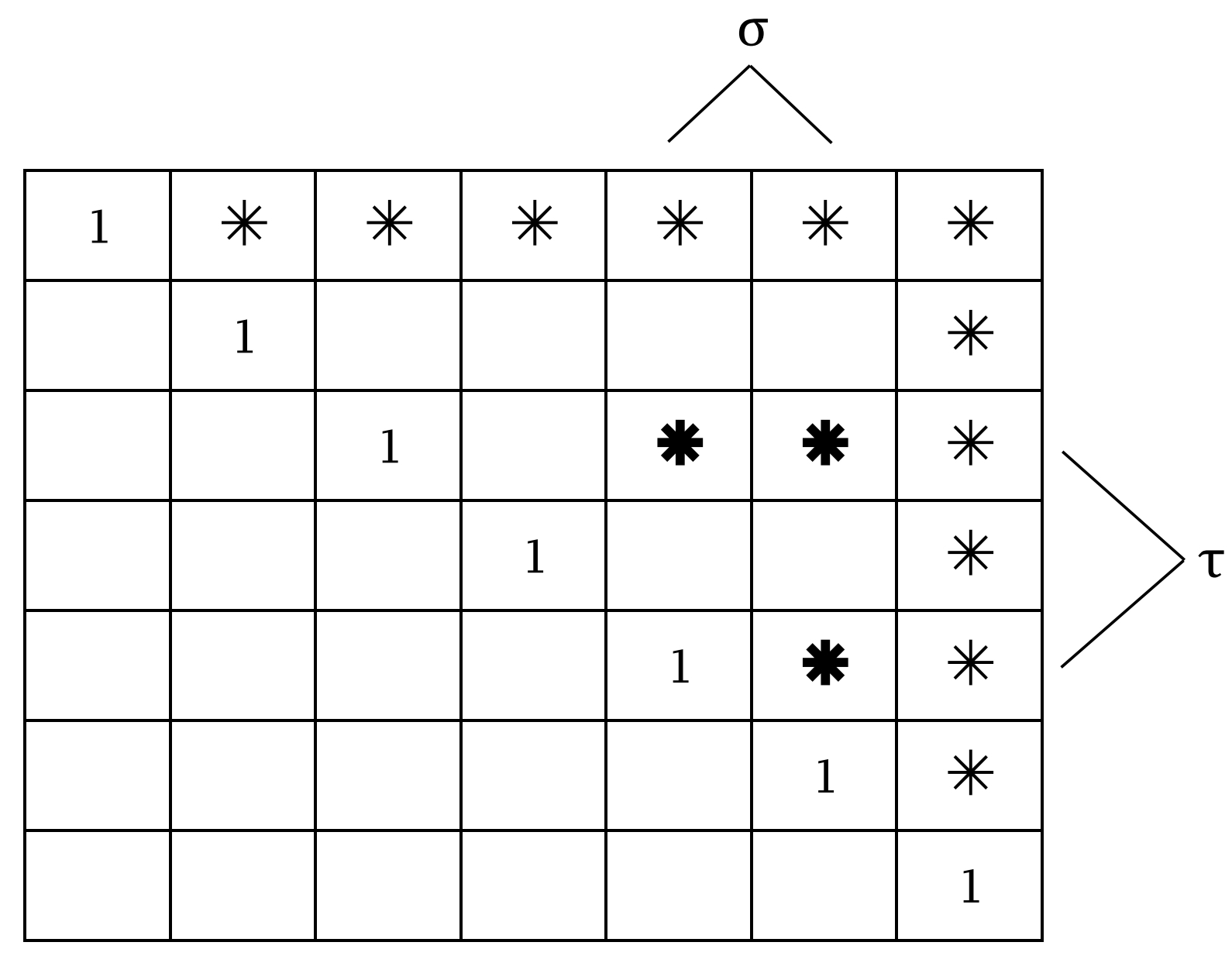}

\begin{small}\textcolor{gray}{\textsc{Figure 4}. 
An example for $n=5$. Here $A = u(X)$ is nonzero \\ at values 
$a_{3,5}$, $a_{3,6}$, $a_{5,6}$, so $\sigma_A = \{ 5, 6 \}, \tau_A = \{ 3, 5 \}$.}
\end{small} 
\end{center}

\br

\begin{disc} \label{d:summary}
\emph{Here is an informal summary of where we are. In order for $X, Y \in U_{n+2}$ to commute, it is necessary and sufficient that the 
following conditions are satisfied. First, $u(X)$ must commute with $u(Y)$ in $U_n$. Second, the equations asserting that 
the elements of $XY$ and $YX$ along the top row and right column, i.e. in positions $(1,2), \dots, (1,n+1)$, 
$(2,n+2), \dots, (n+1, n+2)$, are equal, must hold.  
Third, the ``Heisenberg equation,'' asserting that the top right corners of $XY$ and $YX$ 
are equal (i.e. that $h(X)$ commutes with $h(Y)$) must hold.  Restating, whenever $u(X), u(Y)$ commute in $U_n$ 
and the second equations are satisfied, 
control of commuting falls entirely to the 
Heisenberg equation. As for where we are going: as we will see below, if $A, B \in U_n$ (or some tuple $A_1, \dots, A_k \in U_n$) all commute and 
have ``enough'' zeros, then there exist reasonably large subsets 
$\mcx \subseteq \{ X : u(X) = A \}$ and $\mcy \subseteq \{ Y : u(Y) = B \}$  
between which commuting is entirely controlled by the Heisenberg equation. (``Reasonably large'': 
because the non-Heisenberg equations can then be satisfied by fixing a reasonably small number of values in the top row and right column.)
If moreover these reasonably large subsets are ``symmetric'' (see below)  then under the maps 
$X \mapsto h(X)$, $Y \mapsto h(Y)$, our $\mcx, \mcy$ both map to the same copy of some  
$H_{2m+1}$ inside $H_{2n+1}$,  or the parallel for the $k$-partite case.  
A notion of symmetrization is introduced for this reason.}
\end{disc}

\begin{defn} \label{d:sym}
Given $\sigma \subseteq \{ 3, \dots, n+1 \}$, $\tau \subseteq \{ 2, \dots, n \}$ define the \emph{symmetrizations} 
$\sigma_* = \tau_* = \sigma \cup \tau \subseteq \{ 2, \dots, n+1 \}$. 
\end{defn}

The next theorem summarizes an instance of this analysis, showing the recurrent appearance of
\possible{$\Gamma(H)$ (for some $H$) in $\Gamma(U)$}. 
When $t = 1$, we find a copy of $\Gamma(H)$ for some $H$; 
when $t=2$, of $\Gamma(H \times H)$; we can just as easily state a more general version for arbitrary finite $t$, finding $\Gamma(H \times \cdots \times H)$. 
It is illustrated in Figure 5 below. 

\begin{theorem} \label{t:325}  
Suppose $A_1, \dots, A_t$ form a clique in $\Gamma(U_n)$. 
Then there are subsets $\mathbf{X}^*_i \subseteq \{ X : u(X) = A_i \}$ 
for $i = 1, \dots, t$ such that the $t$-partite graph whose pieces are $\mathbf{X}^*_i$ $($in $\Gamma(U_{n+2})$$)$
is naturally isomorphic to the $t$-partite commuting graph whose pieces are $H_{2m+1}$, where  
$m = n - | \bigcup_i \sigma_{A_i} \cup \bigcup_i \tau_{A_i} |$. 
\end{theorem}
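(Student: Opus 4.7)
The plan is to construct each $\mathbf{X}^*_i$ by zeroing out, in every $X$ with $u(X)=A_i$, exactly those top-row and right-column shell entries of $X$ whose positions could create nontrivial coupling in the non-Heisenberg equations (a) and (b) from the analysis preceding the theorem. Concretely, set
\[
S = \bigcup_{i=1}^{t}\bigl(\sigma_{A_i}\cup\tau_{A_i}\bigr) \;\subseteq\; \{2,\dots,n+1\},
\qquad T = \{2,\dots,n+1\}\setminus S,
\]
so $|T|=n-|S|=m$. Then let
\[
\mathbf{X}^*_i = \bigl\{\, X\in U_{n+2} : u(X)=A_i,\; x_{1,k}=0 \text{ for all } k\in S,\; x_{k,n+2}=0 \text{ for all } k\in S \,\bigr\}.
\]
The motivation is Definition \ref{d:sym}: by symmetrization each pairwise condition that would otherwise force zeros on an index set tailored to a specific pair $(A_i, A_j)$ is replaced by a uniform restriction to the single set $S$ that works for all $t(t-1)$ cross-pairs at once.

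Next I would verify, by one pass through the four cases in the paragraph preceding the theorem, that for any $X\in \mathbf{X}^*_i$ and $Y\in \mathbf{X}^*_j$ both sides of every equation in (a) and (b) collapse to zero. For instance, the LHS of $E_{1,\ell}$ is a sum of terms $x_{1,k}\cdot (A_j)_{k,\ell}$; a nonzero summand would require $k\in \tau_{A_j}\subseteq S$, but then $x_{1,k}=0$ by construction. The RHS and the right-column equations are handled symmetrically. Since $u(X),u(Y)\in\{A_1,\dots,A_t\}$ already commute in $U_n$ by the clique hypothesis (Observation \ref{o:360} applies in both directions once (a), (b) are trivial), the \emph{only} remaining condition for $X$ and $Y$ to commute is the Heisenberg equation $E_{1,n+2}$.

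Finally, I would write down the natural bijection that makes the graph isomorphism manifest. The map
\[
X \;\longmapsto\; \bigl(\,(x_{1,k})_{k\in T},\ (x_{k,n+2})_{k\in T},\ x_{1,n+2}\,\bigr)
\]
sends each $\mathbf{X}^*_i$ bijectively onto a fixed copy of $H_{2m+1}$ sitting inside $H_{2n+1}$, namely the one whose "$x$-slots" and "$y$-slots" are indexed by $T$; each piece has cardinality $p^{2m+1}=|H_{2m+1}|$ as required. Under these bijections, the reduced equation $E_{1,n+2}$ becomes exactly
\[
\sum_{k\in T} x_{1,k}\,y_{k,n+2} \;=\; \sum_{k\in T} y_{1,k}\,x_{k,n+2},
\]
which is precisely the commutation criterion in $H_{2m+1}$. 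Thus the $t$-partite subgraph of $\Gamma(U_{n+2})$ on $\bigsqcup_i \mathbf{X}^*_i$ is isomorphic to the $t$-partite commuting graph on $t$ copies of $H_{2m+1}$, via these piecewise bijections.

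The step I expect to need the most care is the second paragraph: confirming that every appearance of a "nonzero coefficient" in equations (a) and (b) is killed by our choice of $S$, because the coefficient in front of $x_{1,k}$ versus $y_{1,k}$ swings between being drawn from $A_i$ and from $A_j$ depending on which side of which equation one is examining. Symmetrization ($\sigma_*=\tau_*=\sigma\cup\tau$) is exactly what makes the same $S$ absorb all four cases at once, but a careful once-through case analysis is what one must actually supply to be confident no stray cross-pair coupling survives.
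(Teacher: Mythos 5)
Your proposal is correct and follows essentially the same route as the paper's proof: take the symmetrized index set $\bigcup_i(\sigma_{A_i}\cup\tau_{A_i})$, zero out the corresponding shell entries so that the all-zero tuples trivially satisfy the non-Heisenberg equations (a) and (b), and conclude that commuting on the restricted pieces is governed solely by the Heisenberg equation, which on the remaining $m$ free index pairs (plus the corner) is exactly the commuting relation of a common copy of $H_{2m+1}$ inside $H_{2n+1}$ under $X\mapsto h(X)$. Your write-up just makes explicit the coefficient-killing check and the bijection that the paper leaves implicit.
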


\begin{proof}
The above analysis asked for tuples of elements of $\mathbb{F}_p$ with indices in a restricted set solving (a) and (b). 
Notice that in this case the trivial sequences 
(all zeros) are always a solution. Suppose, then, that we are given $A_1, \dots, A_t \in U_n$ forming a clique in $\Gamma(U_n)$ and 
we are considering the $t$-partite commuting graph whose $i$-th part is $\{ X : u(X) = A_i \}$. 
Letting $\sigma_* = \tau_* = \bigcup_i \sigma_{A_i} \cup \bigcup_i \tau_{A_i}$
and restricting $\{ X : u(X) = A_i \}$ to $\mcx^*_i = \{ X : u(X) = A_i $, $x_{1,i} = 0$ for $i \in \sigma_*$, $x_{j,n+2} = 0$ for $j \in \tau_* \}$, 
the commuting on the $t$-partite graph $\mcx^*_1, \dots, \mcx^*_t$ is entirely controlled by the Heisenberg equation; 
moreover, there is a particular copy of $H_{2m+1} \subseteq H_{2n+1}$ which is the common image of 
every $\mcx^*_i$ under the map $X \mapsto h(X)$. 
\end{proof}

\begin{disc}
\emph{First, regarding symmetrization: had we said in \ref{t:325} only that the resulting graph was  
an induced subgraph of $\Gamma(H_{2n+1} \times \cdots \times H_{2n+1})$,  a priori the vertex sets $V_i \subseteq H_{2n+1}$ 
would need not be the same; for small sizes, this may affect statistics.   This said, no real effort is being made here to optimize $m$.  
Second, some readers of Theorem \ref{t:325} may see the possible power of $H$ to recur, especially when the $A_i$'s are similar or small, 
and some may see the (a priori) possible power of 
other $A_i$'s to keep $m$ low by collectively spreading out.  Both forces are 
interesting in their own right (see \ref{f:32} below).  Further information on their 
relative strengths, perhaps on average, could certainly be interesting.} 
\end{disc}

\br
\begin{center}
\includegraphics[height=28mm]{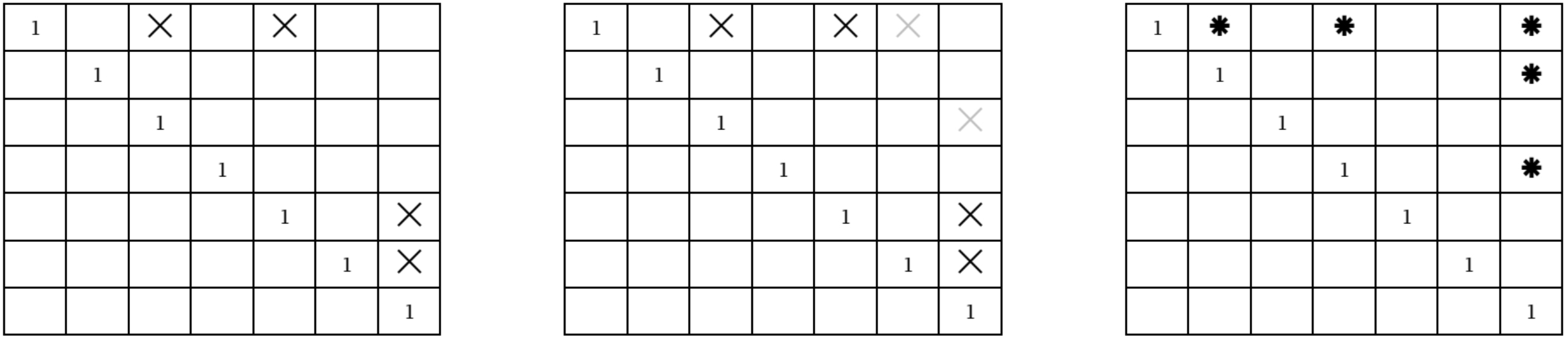}

\begin{small}\textcolor{gray}{\textsc{Figure 5}. 
Theorem \ref{t:325} for $A_1 = \dots = A_t = A$ as in figure 4.  
\\ First, the left image marks values implicated by $\tau$ (on the top row) 
\\ and $\sigma$ (right column).  The middle image symmetrizes. Now 
set all \\ marked values e.g. to zero, and on the right is the copy of 
$H_{2k+1}$ \\ which remains. Here $k=2$, $n=5$.}\end{small}
\end{center}
\br

Here is a simple example of a corollary of the previous proof, however, notice that the partition given depends on both $A$ and $B$; 
it is not claimed that a single partition of $\{ X : u(X) = A \}$ works against $\{ Y : u(Y) = B \}$ for any $B$.  

\begin{cor} \label{c:part}
Let $A, B \in U_n$ and $m = n - |\sigma_A \cup \sigma_B \cup \tau_A \cup \tau_B|$. 
There are equipartitions of $\mcx = \{ X : u(X) = A \}$ 
into $\{ \mcx_i : i < p^{2(n-m)} \}$  
and of $\mcy = \{ Y : u(Y) = B \}$
into $\{ \mcy_i : i < p^{2(n-m)} \}$ 
such that: 
for any two $i,j$, we have that $\Gamma(\mcx_i \times \mcy_j)$ is either isomorphic 
to $\Gamma(H_{2m+1} \times H_{2m+1})$ or the empty graph. 
\end{cor}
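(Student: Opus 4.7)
The plan is to build the equipartitions by fixing the shell entries of $X\in \mcx$ and $Y\in \mcy$ at the symmetrised positions $T:=\sigma_A\cup\sigma_B\cup\tau_A\cup\tau_B$ of Definition \ref{d:sym}. Explicitly, for each tuple $\mathbf{v}\in \mathbb{F}_p^{2(n-m)}$ prescribing values at the $x_{1,j}$'s and $x_{j,n+2}$'s with $j\in T$, let $\mcx_{\mathbf{v}}\subseteq \mcx$ be the block realising these values, and define $\mcy_{\mathbf{w}}\subseteq \mcy$ analogously. Since $|T|=n-m$, this gives exactly $p^{2(n-m)}$ parts on each side, each of size $p^{2n+1}/p^{2(n-m)}=p^{2m+1}=|H_{2m+1}|$. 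Via the free shell entries together with the corner $x_{1,n+2}$, each block is canonically in bijection with the copy of $H_{2m+1}$ sitting inside $H_{2n+1}$ as the subgroup of Heisenberg elements that vanish at the $T$-positions.

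For a fixed pair $(\mcx_{\mathbf{v}},\mcy_{\mathbf{w}})$ I would then apply the three-part decomposition of the commuting relation developed in the discussion leading into Theorem \ref{t:325}. The coefficients appearing in equations $(a)$ and $(b)$ are entries of $A$ or $B$, and these vanish unless the row (resp.\ column) index lies in $\tau_A\cup\tau_B$ (resp.\ $\sigma_A\cup\sigma_B$) and hence in $T$; so $(a)$ and $(b)$ are really linear conditions in $\mathbf{v}$ and $\mathbf{w}$ alone and either hold on all of $\mcx_{\mathbf{v}}\times \mcy_{\mathbf{w}}$ or on none of it. In the ``none'' case the bipartite graph $\Gamma(\mcx_{\mathbf{v}}\times \mcy_{\mathbf{w}})$ is empty. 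In the ``all'' case the only surviving constraint is the Heisenberg equation $(c)$, which after separating the now-fixed $T$-contributions takes the form $f(X,Y)=c(\mathbf{v},\mathbf{w})$, where $f$ is the Heisenberg symplectic form read off the free shell entries and $c(\mathbf{v},\mathbf{w})$ is a specific bilinear expression in the fixed $T$-values.

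The main obstacle, and the heart of the argument, is to verify that the constant $c(\mathbf{v},\mathbf{w})$ vanishes on the locus where $(a)$ and $(b)$ are satisfied, so that under the identification of each block with $H_{2m+1}$ the commuting relation on $\mcx_{\mathbf{v}}\times \mcy_{\mathbf{w}}$ reduces exactly to the bare Heisenberg commuting relation $f(X,Y)=0$. This is a linear-algebra cancellation that should exploit the symmetry $\sigma_*=\tau_*$ baked into Definition \ref{d:sym}, so that the $T$-contributions to $(c)$ match up across the two sides of the shell, together with the standing hypothesis that $A$ and $B$ commute in $U_n$, which supplies the final identities needed to re-express the $T$-constant as a consequence of $(a)$ and $(b)$. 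Once this vanishing is verified, the identification with $\Gamma(H_{2m+1}\times H_{2m+1})$ is Theorem \ref{t:325} applied block-by-block, and a careful bookkeeping of which $(\mathbf{v},\mathbf{w})$ pairs produce the ``empty'' case versus the ``Heisenberg'' case completes the proof.
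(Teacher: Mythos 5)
Your overall route is the same as the paper's: partition each fibre by the values of the shell entries at the symmetrized positions $T=\sigma_A\cup\sigma_B\cup\tau_A\cup\tau_B$ of Definition \ref{d:sym}, observe that the coefficients in equations (a) and (b) are entries of $A$ or $B$ so that these equations only involve the frozen entries and are block-constant, and then reduce each surviving block pair to the Heisenberg equation. Where you go beyond the paper's one-paragraph proof is in flagging the constant $c(\mathbf{v},\mathbf{w})$ contributed by the frozen entries: you are right that this is exactly the delicate point, since the paper's phrase ``reduces to the Heisenberg equation on the values which remain free'' silently assumes that contribution vanishes (as it does in Theorem \ref{t:325}, where all frozen entries are set to zero).

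However, the cancellation you call the heart of the argument is false, so your plan cannot be completed as written. Take $n=3$ and $A=B\in U_3$ whose only nonzero above-diagonal entry is $a_{2,3}=1$ (in the $U_{n+2}$-indexing of the paper), so $\sigma_A=\{3\}$, $\tau_A=\{2\}$, $T=\{2,3\}$, $m=1$. For $u(X)=u(Y)=A$, commuting is equivalent to $x_{1,2}=y_{1,2}$, $x_{3,5}=y_{3,5}$, together with the Heisenberg equation $\sum_{i=2}^{4}(x_{1,i}y_{i,5}-y_{1,i}x_{i,5})=0$. Now take the block of $\mcx$ with frozen values $(x_{1,2},x_{1,3},x_{2,5},x_{3,5})=(1,0,0,0)$ and the block of $\mcy$ with $(y_{1,2},y_{1,3},y_{2,5},y_{3,5})=(1,0,1,0)$: equations (a) and (b) hold, yet $c=\mx_{1,2}\my_{2,5}-\my_{1,2}\mx_{2,5}+\mx_{1,3}\my_{3,5}-\my_{1,3}\mx_{3,5}=1$, and the cross-block relation becomes $x_{1,4}y_{4,5}-y_{1,4}x_{4,5}=-1$. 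This bipartite graph is nonempty but has $p$ isolated vertices on each side (those with $x_{1,4}=x_{4,5}=0$), whereas $\Gamma(H_3\times H_3)$ has minimum degree $p^2$ and a different edge count; so it is neither empty nor isomorphic to $\Gamma(H_3\times H_3)$. Neither the symmetry $\sigma_*=\tau_*$ nor the hypothesis that $A$ and $B$ commute forces $c=0$: all this partition yields in general is ``empty, or a level set of the Heisenberg form,'' which coincides with the commuting graph exactly when the level is zero. Consequently a block-by-block cancellation lemma of the kind you sketch is not available; a correct argument must either choose the blocks differently or weaken the conclusion to allow nonzero level sets (the all-zero block, as in Theorem \ref{t:325}, is the case that genuinely works). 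Note that this defect is shared by --- and is in effect correctly diagnosed by your write-up in --- the paper's own proof, which uses the same partition and passes over the constant term without comment.
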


\begin{proof} 
Let $\sigma_* = \tau_* = \sigma_A \cup \sigma_B \cup \tau_A \cup \tau_B$.  
Partition the elements of $\{ X : u(X) = A \}$ according to their values on $\{ x_{1,i} : i \in \tau_* \} \cup \{ y_{j,n+2} : j \in \sigma_* \}$. 
If $A, B$ do not commute, there is an empty graph in each case. Otherwise, 
fixing $i,j$ we have that the equations (a) and (b) [as above, or in \ref{t:325}] are uniformly satisfied or unsatisfied on all pairs 
from $\mcx_i$ and $\mcy_j$. The second case gives the empty graph, and the first reduces to the Heisenberg equation 
on the values which remain free. 
\end{proof}

Similar arguments show other ways quasirandomness is pervasive in $\Gamma(U)$. 

\begin{disc} 
\emph{In $H$ the randomness is only one level deep. In $U$ 
the appearance of quasirandomness is 
indestructible in the sense that it recurs after arbitrary quotients by $H$, until we reach $[1]$ (or in the other direction, reappears  
under unlimited reverse quotienting).}
\end{disc}

\begin{disc}
\emph{Still, 
if $A, B$ commute in $U_{n}$, then 
necessarily $u^\ell(A)$ commutes with $u^\ell(B)$ for $0 \leq \ell < \frac{n+1}{2}$.  
Moreover, $m$ plays a role: there are fewer edges between $\{ X : u(X) = A \}$ and $\{ Y : u(Y) = B \}$ when 
$A$, $B$ provide more constraints. 
Is there a quantifiable drift of edges as $n \rightarrow \infty$ 
towards having both ``lower rank'' endpoints, visible for instance in $\deg(X, [1])$ in terms of $\sigma, \tau$ of $u(X)$?} 
\end{disc}

\begin{disc}
\emph{The successive peeling off of Heisenberg groups can be organized differently. By simply multiplying matrices it is easy to see that in 
$U(n+2,p)$ the subgroup $H(2n+1,j,p)$ which is non-zero only in the top $j$ rows and last $j$ columns is a normal subgroup, with complement 
$U(j)$ consisting of matrices in $U(n+2,p)$ which are zero in the top $j$ rows and last $j$ columns. This complement is isomorphic to 
$U(n-2j,p)$ $($and the extension splits$)$. $H(2n+1,j)$ is a subgroup of $H(2n+1,j+1,p)$ for $j$ from $1$ to $n/2$ with the top being 
$U(n+2,p)$.  It is not always the case that there is a complement of a normal pattern subgroup: the center and commutator of $U(n+2,p)$ are 
normal pattern subgroups without complements. For more on this see \cite{marberg}.}
\end{disc}

\begin{disc}
\emph{There has been recent work in developing various supercharacter theories for groups such as $H(\omega)$ and some $U(\omega)$. For an elegant 
exposition, see \cite{lochon}. It would be fascinating to see some applications of this theory along the lines of \cite{AC}. See the work of 
 \cite{BSC} for first steps.} 
\end{disc}

We conclude with several comments about this very interesting graph. 

\begin{disc} \label{f:32}
\emph{\cite[p. 22]{p-s} gives the known bounds for $c(U_n(p))$ as:} 
\[ p^{\frac{n^2}{12}(1 + o(1))} \leq c(U_n(p)) \leq  p^{\frac{7 n^2}{44}(1 + o(1))} \]
\emph{where the lower bound is attributed to Higman and the upper bound to Soffer. } 
\end{disc}

By Fact \ref{et-fact} 
the average degree of a vertex in $\Gamma(U_n)$ is subject to essentially the same bounds.
It also follows that as $n \rightarrow \infty$ the density of $\Gamma(U_n)$ is ``super-sparse'' of order $p^{-cn^2}$. 
There is a lot of effort in the graph theory limit community to define structure theory for sparse graphs. 
These examples show that truly super-sparse graphs can still have interesting structure.

\br



\begin{thebibliography}{00}

\bibitem[Aguiar et al.]{aguiar}  Aguiar, Andr\'e, Benedetti, Bergeron, Chen, Diaconis, Hendrickson, Hsiao, Isaacs, Jedwab, Johnson, Karaali, Lauve, Le, Lewis, Li, Magaard, Marberg, Novelli, Pang, Saliola, Tevlin, Thibon, Thiem, Venkateswaran, Vinroot, Yan, Zabrocki. ``Supercharacters, symmetric functions in noncommuting variables, and related Hopf algebras.'' Advances in Math (2012) vol 229 no. 4 pps 2310--2337. 

\bibitem[Alon]{alon}  Noga Alon, ``Asymptotically optimal induced universal graphs.'' Geometric and Functional Analysis 27 (2017) 1--32. 

\bibitem[Andr\'e]{andre} Carlos A. M. Andr\'e, ``Basic characters of the unitriangular group.'' J. Algebra 175 (1995), 287--319.

\bibitem[Arias-Castro, Diaconis, Stanley]{AC} E. Arias-Castro, P. Diaconis, and R. Stanley (2004) ``A super-class walk on upper triangular matrices.'' 
J. Algebra 278(2): 739--765.   

\bibitem[Aschbacher]{aschbacher} M. Aschbacher, \emph{Finite Group Theory}, second edn. Cambridge University Press, 2000. 

\bibitem[Bendikov, Saloff-Coste]{BSC} A. Bendikov and L. Saloff-Coste. ``Random walks on some countable groups.'' 
\emph{Groups, graphs and random walks}, 77-103, London Math. Soc. Lecture Note Ser., 436, Cambridge University Press, Cambridge, 2017. 

\bibitem[Bump et al.]{bump} D. Bump, P. Diaconis, A. Hicks, L. Miclo, H. Widom. ``An exercise (?) in Fourier analysis on the Heisenberg group.'' (2017)
Ann Fac Sci Toulouse Math S\'er 6,  26(2):263-288. 

\bibitem[Cameron]{cameron} Peter Cameron, ``The random graph.''  Arxiv:1301.7544 (2013).  Revised chapter for new edition of book ``The mathematics of 
Paul Erd\H{o}s.'' 

\bibitem[Cameron2]{cameron-2} Peter Cameron, ``Graphs defined on groups.''  ArXiv:2102.11177 (2021).  To appear, International J. Group Theory. 

\bibitem[Chang and Keisler]{c-k} C. C. Chang and H. J. Keisler. \emph{Model Theory}. Third edition. Dover Publications, New York, 1990, 2012. 

\bibitem[Chatterjee and Diaconis]{c-j} S. Chatterjee and P. Diaconis, preprint, August 2021. 

\bibitem[Cherlin-Hrushovski]{c-h} G. Cherlin and E. Hrushovski, \emph{Finite Structures with Few Types}. Annals of Math. Studies, no. 152, Princeton University Press, 2003. 

\bibitem[Chung-Graham-Wilson]{c-g-w} F. R. K. Chung, R. L. Graham, and R. M. Wilson. ``Quasi-random graphs.'' Combinatorica (1989) 9(4):345--362.

\bibitem[Coregliano-Razborov]{c-r} L. Coregliano and A. Razborov. ``Natural quasirandomness properties.'' November 10, 2020 (arXiv). 

\bibitem[Diaconis-Hough]{dh}  P. Diaconis and B. Hough. ``Random walk on unipotent matrix groups.'' 
Annales scientifiques de l'\'Ecole normale sup\'erieure, 54:3 (2021) pages 587--625. 

\bibitem[Diaconis-Isaacs]{d-i} P. Diaconis and M. Isaacs, ``Supercharacters and superclasses for algebra groups.'' Trans. AMS 
(vol 360), number 5, May 2008, pages 2359--2392. 


\bibitem[Ehrenfeucht-Faber]{e-f}
A. Ehrenfeucht and V. Faber, ``Do infinite nilpotent groups always have equipotent abelian subgroups?'' Kon. Nederl. Akad. Wet. A75 (1972) 202--209.

\bibitem[Erd\H{o}s-Strauss]{e-s} P. Erd\H{o}s and E.G. Strauss, ``How abelian is a finite group?''  Linear and Multilinear Algebra (1976) 3:4, 307--312. 

\bibitem[Erd\H{o}s-Tur\'{a}n]{e-t} Erd\H{o}s and Tur\'{a}n, ``On some problems of statistical group-theory IV,'' Acta Math Hungaricae 19 (1968) 413--435, 
quoted in \cite{e-s}. 

\bibitem[Felgner]{felgner} U. Felgner, ``On $\aleph_0$-categorical extra-special $p$-groups.''  Logique et Analyse 18 (1975) 407--428.  

\bibitem[Gowers]{gowers} W. T. Gowers, ``Quasirandom groups.'' Combinatorics, Probability, and Computing, vol 17, issue 3, May 2008, pps. 363--387. 

\bibitem[Gudivok et al]{wild} P. M. Gudivok, Yu. V. Kapitonova, S. S. Polyak, V. P. Rud'ko, and A. I. Tsitkin. ``Classes of conjugate elements of the 
unitriangular group.'' Cybernetics 26, 47--57 (1990). 


\bibitem[Guralnick-Robinson]{gr} R. M. Guralnick and G. R. Robinson. ``On the commuting probability in finite groups.'' J. Algebra vol 300, issue 2 (2006) 
pages 509--528.  

\bibitem[Hall-Higman]{h-h} P. Hall and G. Higman, ``On the $p$-length of $p$-soluble groups and reduction theorems for Burnside's Problem.'' 
Proc. London Math Soc (3) 6 (1956), 1--42. 

\bibitem[Hall]{hall}  J. I. Hall, ``The number of trace-valued forms and extraspecial groups.'' J. London Math Soc. (2) 37 (1988) 1--13. 

\bibitem[Howe]{hbook} R. Howe. ``On the role of the Heisenberg group in Harmonic Analysis.'' Bull. AMS (N.S.) 3(2): 821--843, September 1980. 

\bibitem[Higman]{higman} G. Higman. ``Enumerating $p$-groups. I: Inequalities.'' Proc. London Math Soc. (1960) 3, 10.  pps. 24-30. 

\bibitem[Huppert]{huppert} B. Huppert, \emph{Endliche Gruppen I}, Berlin, 1967.  

\bibitem[Isaacs]{isaacs}  M. Isaacs.  ``Counting characters of upper triangular groups.'' J. Algebra (2007) vol 315, 2, 15 September 2007, 
698--719. 

\bibitem[Jorgenson and Lang]{jorgenson} J. Jorgenson and S. Lang. (2001) ``The ubiquitous heat kernel.'' In Engquist, B. and Schmid, W. 
(eds) \emph{Mathematics Unlimited -- 2001 and beyond}. Springer. 

\bibitem[Levin-Peres]{levin} D. Levin and Y. Peres. Markov chain convergence. 

\bibitem[Liu]{liu} K. Liu (1988) Heat kernel, symplectic geometry, moduli space and finite groups. Surveys in differential geometry. 

\bibitem[Lochon]{lochon} J. Lochon (2019). \emph{Supercharacter theories for discrete algebra groups.} Ph. D. thesis, Department of Mathematics, 
University of Lisbon. 

\bibitem[Lovasz]{lovasz} L. Lovasz. \emph{Large networks and graph limits.} AMS Colloquium Publications, vol. 60, 2012. 

\bibitem[Macpherson-Steinhorn]{mac-st}  D. Macpherson and C. Steinhorn. ``One-dimensional asymptotic classes of finite structures.'' 
Trans. AMS 360 (2008) no. 1, 411--448. 

\bibitem[Marberg]{marberg} E. Marberg 
``Supercharacters and superclasses of normal pattern subgroups of the unipotent upper 
triangular matrix group.''   J Algebraic Combinatorics 35 (2012) 61--92.  

\bibitem[Mumford]{mumford}   D. Mumford. ``Tata lectures on Theta I.'' Springer, 2007. 

\bibitem[Pak-Soffer]{p-s} I. Pak and A. Soffer, ``On Higman's $k(U_n(\mathbb{F}_q))$ conjecture.'' (2015) arXiv:1507.00411.  

\bibitem[Pyber]{pyber} L. Pyber, ``How abelian is a finite group?'' (1997) In: R. L. Graham, J. Ne\v{s}et\v{r}il, eds, \emph{The mathematics of Paul Erd\H{o}s I: Algorithms and 
combinatorics}, vol 13. Springer, Berlin, Heidelberg, pps. 372--284. 

\bibitem[Shelah]{shelah} S. Shelah, \emph{Classification Theory}, North-Holland, 1978; second edition: \emph{Classification Theory and the number of non-isomorphic models.} North-Holland, 1990.  

\bibitem[Shelah-Stepr$\overline{a}$ns]{sh-st} S. Shelah and J. Stepr$\bar{a}$ns. ``Extraspecial $p$-groups.'' Annals of Pure and Applied Logic 34 (1987) 87--97. 

\bibitem[Spencer]{spencer} J. Spencer.  \emph{The strange logic of random graphs.}  Springer, 2001. 


\bibitem[Krivelevich-Sudakov]{sudakov} M. Krivelevich and B. Sudakov. ``Pseudo-random graphs.'' Conference on finite and infinite sets, Budapest. 
Bolyai Society Mathematical Studies, X. pp. 1-64. 

\bibitem[Suzuki]{suzuki} M. Suzuki, \emph{Group Theory}, volumes I-II. Springer, reprinted 2014. 

\bibitem[Thomason]{thomason} A. Thomason. ``Pseudorandom graphs.'' In \emph{Random graphs '85 $($Pozna\'{n}, 1985$)$}, North-Holland Math. Studies, vol. 144, pps.  307--331. North-Holland, Amsterdam, 1987.

\bibitem[Tomkinson]{tomkinson} M. J. Tompkinson, \emph{FC-Groups}. Pitman, London, 1984. 

\end{thebibliography}
\end{document}